\newtheorem{remark}[theorem]{Remark}
\def\fin{\ifmmode{\Large$\diamond$}\else{\unskip\nobreak\hfil
    \penalty50\hskip1em\null\nobreak\hfil{\Large$\diamond$}
    \parfillskip=0pt\finalhyphendemerits=0\endgraf}\fi}
\def\be#1#2\ee{\begin{equation}\label{eq:#1}#2\end{equation}}
\def\req#1{{\rm(\ref{eq:#1})}}
\def\bdm  {\begin{displaymath}}
  \def\edm  {\end{displaymath}}
\def\bdmal{\begin{displaymath}\begin{aligned}}
    \def\edmal{\end{aligned}\end{displaymath}}
\mathchardef\PhiG="0108
\mathchardef\PsiG="0109
\mathchardef\Omega="010A
\mathchardef\Sigma="0106
\mathchardef\Gamma="0100
\mathchardef\Lambda="0103
\newcommand{\N}{{\mathord{\mathbb N}}}
\newcommand{\R}{{\mathord{\mathbb R}}}
\renewcommand{\S}{W^{1,\infty}(\partial\D)}  
\newcommand{\Sp}{W^{1,\infty}(\partial\D')}  
\newcommand{\W}{{W_c^{1,\infty}(\Omega)}}
\newcommand{\B}{{\cal B}}
\newcommand{\CT}{{C_\D}}
\newcommand{\CTp}{{C_{\D'}}}
\newcommand{\norm}[1]{\|#1\|}
\newcommand{\scalp}[1]{\langle\,#1\,\rangle}
\newcommand{\rmd}{\,\mathrm{d}}
\newcommand{\ds}{\rmd s}
\newcommand{\dx}{\rmd x}
\newcommand{\ytilde}{{\widetilde y}}
\def\req#1{{\rm(\ref{eq:#1})}}
\newcommand{\dupdots}{\mathinner{\mkern1mu\raise\p@
    \vbox{\kern7\p@\hbox{.}}\mkern2mu
    \raise4\p@\hbox{.}\mkern2mu\raise7\p@\hbox{.}\mkern1mu}}
\newenvironment{cmatrix}{\left[\cmatrixc}{\endmatrix\right]}
\def\thsp{\hspace*{0.1ex}}
\newcommand{\upunkt}{{\dot{u}}}
\newcommand{\utilde}{{\widetilde u}}
\newcommand{\D}{{\mathscr D}}
\newcommand{\Q}{{\mathscr Q}}
\newcommand{\dtheta}{\rmd \theta}
\renewcommand\@biblabel[1]{#1.}
\title{On the shape derivative of polygonal inclusions in the conductivity 
problem}
\author{Martin Hanke\thanks{Institut f\"ur Mathematik, Johannes
    Gutenberg-Universit\"at Mainz, 55099 Mainz, Germany
    ({\tt hanke@math.uni-mainz.de}).}}
\begin{document}
\sloppy
\maketitle

\begin{abstract}
We consider the conductivity problem for a homogeneous body with an
inclusion of a different, but known, conductivity. 
Our interest concerns the associated shape derivative, i.e., 
the derivative of the corresponding electrostatic potential with respect to 
the shape of the inclusion. For a smooth inclusion it is
known that the shape derivative is the solution of a specific inhomogeneous
transmission problem. We show that this characterization of the
shape derivative is also valid when the inclusion is a polygonal domain,
but due to singularities at the vertices of the polygon, the
shape derivative fails to belong to $H^1$ in this case.
\end{abstract}

\begin{keywords}
Impedance tomography, Lipschitz domain, transmission problem,
shape optimization, corner singularities
\end{keywords}

\begin{AMS}
  {\sc 35B65, 49N60, 49Q10, 65N21}
\end{AMS}


%

\section{Introduction}
\label{Sec:Intro}
The inverse conductivity problem aims at information 
about the spatial electric conductivity distribution within a body, 
which is only accessible to electrostatic or quasi-electrostatic 
measurements at its boundary. 
For example, in electric impedance tomography a series of probing currents 
is applied through electrodes attached to the surface of the body, 
and the resulting boundary potentials are measured (or the other way round).
As shown by Astala and P\"aiv\"arinta~\cite{AsPa06}
the full set of these current/voltage pairs completely determines an 
isotropic conductivity distribution in two space dimensions. 
See the monographs by Mueller and Siltanen~\cite{MuSi12} and 
Kirsch~\cite{Kirs11} for an exposition of electric impedance tomography. 
Except for the D-bar~\cite{KLMS07} 
and the layer stripping methods~\cite{SCII91} most of the pertinent 
inversion algorithms are iterative in nature:
The spatial conductivity in question is represented by an element 
from the nonnegative cone of a reasonable infinite dimensional vector space 
of functions, and in each iteration the gradient of some loss function 
is used to update the current approximation of the conductivity distribution;
cf., e.g., \cite{CINGS90,Dobs92,PLP95} for some early references. 
This is the method of choice when the material is very inhomogeneous.

When the object is homogeneous, except for an inclusion of some other known
material say, then one can hope that only one or a few pairs of Cauchy data
may suffice to determine the location and the shape of the inclusion. 
Sophisticated noniterative reconstruction methods
like pole fitting methods~\cite{BLMS99,KaLe04,Hank08} 
or the computation of the convex source support~\cite{HHR08} have been 
developed for this setting, but iterative methods can also be an option, 
provided they 
exploit gradient information with respect to the shape of
the inclusion (or its parameterization).

Such derivatives have been termed \emph{shape derivative} or 
\emph{domain derivative}, and have originally been developed 
in the optimization community, cf., e.g., \cite{SoZo92,HePi05}. 
The idea is to analyze the impact
of a pointwise perturbation of the boundary of the inclusion in the direction 
of a vector field that is attached to it. Then a gradient descent or Newton
type method can be used to shift and deform an approximate inclusion so as
to match the given data as good as possible.

Shape derivatives for inverse conductivity problems have been determined 
in the late 90's of the previous century.
However, up to recently their rigorous foundation
has been limited to inclusions with a certain smoothness,
e.g., $C^{1,1}$-domains. For this smooth case it has been shown 
by Hettlich and Rundell~\cite{HeRu98} that the shape derivative of an
inclusion with prescribed conductivity can be characterized via the solution 
of a transmission problem with distributed sources sitting on the boundary of
the inclusion: The solution of this differential equation provides
the perturbation of the electric potential at each individual point
within and on the boundary of the body under consideration.
More details and references follow in the subsequent section.

In the optimization community shape derivatives for less regular domains, 
e.g., Lipschitz domains,
have been studied, e.g., by Delfour and Zol\'{e}sio~\cite{DeZo92}, 
Lamboley, Novruzi, and Pierre~\cite{LNP16}, and Laurain~\cite{Laur20};
these works focus on the derivative of certain integrated shape functionals.
In the application that we are interested in, such a functional can be, 
for example, the inner product of the given data with a certain test function
on the boundary of the two-dimensional body. For this particular
case Beretta, Francini, and Vessella~\cite{BFV17} have determined 
in a \emph{tour de force} 
the impact of a perturbation of a conducting polygonal inclusion.
In a subsequent paper, 
Beretta, Micheletti, Perotto, and Santacesaria~\cite{BMPS18} 
gave a somewhat more elegant derivation of the same result 
using the established shape derivative theory.
Still, this functional only provides the shape derivative of the given data
in a weak (i.e., variational) form, which lacks
the interpretation of \cite{HeRu98} via an underlying transmission problem. 
Further, the impact of the perturbation of the polygon on the electric 
potential in the interior of the body (i.e., the state variable)
remained unresolved.

In this paper we show that the same transmission problem as in the smooth
case describes the sensitivity of the electric potential at each point 
within the body; its trace on the outer boundary provides an explicit 
definition of the shape derivative of the electrostatic measurements. 
Due to singularities at the vertices of the polygon, the solution of this
transmission problem is less regular than in the smooth case. 
This presents some difficulties in proving the existence of a solution, 
which can be overcome by using a technique
which goes back to Kondratiev~\cite{Kond67} (see also Grisvard~\cite{Gris92}),
and which has also been employed in \cite{Laur20}.
This technique is tailored to the two-dimensional case, but it may be
possible to follow the approach from \cite{Gris92} to also treat the case of
polyhedral inclusions in three dimensional domains.

Our results are relevant for investigating the stability of
numerical algorithms for solving the inverse problem. 
As we show in \cite{Hank24} the outcome of the present work implies
that the determination of a polygon from 
two pairs of current/voltage boundary measurements 
-- which is possible according to Seo~\cite{Seo96} -- is actually well-posed, 
i.e., the inverse operator is locally Lipschitz continuous.
To the best of our knowledge, a proof of this result on the grounds 
of the aforementioned weak representation of the shape derivative by 
Beretta et al.\ does not seem to be possible.
Of course, our explicit representation of the shape derivative can also
be exploited to extend the Newton-type scheme in \cite{HeRu98}, for example,
to polygonal inclusions.

The outline of this paper is as follows. In Section~\ref{Sec:background}
we set up the problem under consideration and review known results about 
the associated shape derivative. 
After that we focus on polygonal inclusions only. 
In Section~\ref{Sec:forward} we recapitulate the properties of the 
electric potential for such inclusions.
Then, in Section~\ref{Sec:estimates} we improve upon some estimates 
for the shape derivative of the electrostatic boundary measurements determined 
in \cite{BFV17,BMPS18}, and provide preliminary results about the
shape derivative of the electric potential within the body. 
In Section~\ref{Sec:HeRu98} we turn to the transmission problem 
formulated in \cite{HeRu98}, and show that it has a unique solution 
when the inclusion is a polygon, and we determine its regularity.
That this solution coincides with the shape derivative of the electric
potential is the topic of Section~\ref{Sec:domderiv}. In the final 
section we briefly discuss the case of an
insulating or a perfectly conducting polygonal inclusion; all our results
essentially extend to these two degenerate cases.

\section{The conductivity problem and associated shape derivatives}
\label{Sec:background}
Let $\Omega\subset\R^2$ be a simply connected Lipschitz domain 
and $\D$ be another Lipschitz domain with simply connected closure
$\overline\D\subset\Omega$. 
The outer normal vector on $\partial\D$ and $\partial\Omega$, respectively,
will be denoted by $\nu$.
We call $\D$ the inclusion, and we assume that the conductivity in $\Omega$ 
is given by
\be{sigma}
   \sigma(x) \,=\, \begin{cases}
                      k\,, & \text{for $x\in\D$}\,,\\
                      1\,, & \text{for $x\in\Omega\setminus\overline\D$}\,,
                   \end{cases}
\ee
where $k$ is taken to be greater than zero and different
from one; in Section~\ref{Sec:degeneratecases}, 
we will briefly treat the degenerate cases $k=0$ and $k=+\infty$, respectively.

Given a fixed (nontrivial) driving boundary current 
\bdm
   f\,\in\, L^2_\diamond(\partial\Omega)
     \,=\, \bigl\{\,f\in L^2(\partial\Omega) \,:\, 
                    \int_{\partial\Omega}f\ds=0\,\bigr\}\,,
\edm
the induced quasistatic electric potential
\bdm
   u\,\in\, H^1_\diamond(\Omega) 
   \,=\, \bigl\{\,u\in H^1(\Omega)\,:\, \int_{\partial\Omega} u\ds = 0\,\bigr\}
\edm
is the unique solution of the conductivity equation
\be{conductivity-equation}
   \nabla\cdot (\sigma \nabla u) \,=\, 0  \quad \text{in $\Omega$}\,, \qquad
   \frac{\partial}{\partial\nu} u \,=\, f \quad \text{on $\partial\Omega$}\,,
\ee
with vanishing mean on $\partial\Omega$. The restrictions $u^-=u|_\D$ and
$u^+=u|_{\Omega\setminus\overline\D}$ 
are both harmonic (smooth) functions, which satisfy the transmission conditions
\be{Bnu}
   \bigl[u\bigr]_{\partial\D} = u^+|_{\partial\D}-u^-|_{\partial\D} = 0 
   \qquad \text{and} \qquad
   \bigl[D_\nu u \bigr]_{\partial\D} = 
   \frac{\partial}{\partial\nu}u^+ \,-\, k\,\frac{\partial}{\partial\nu} u^-
   = 0
\ee
on $\partial\D$, 
where the equalities are to be understood in $H^{\pm 1/2}(\partial\D)$, 
respectively.
Alternatively, $u$ can be characterized as the unique solution in
$H^1_\diamond(\Omega)$ of the variational problem 
\be{forwardvarproblem}
   \int_\Omega \sigma\,\nabla u\cdot\nabla w\dx 
   \,=\, \int_{\partial\Omega} fw\ds \qquad \text{for all $w\in H^1(\Omega)$}\,.
\ee

Assuming that the (normalized) background conductivity and its 
anomalous value $k>0$ in the inclusion are fixed (and known), 
we are interested in the sensitivity of the boundary data 
\bdm
   u|_{\partial\Omega} \,=:\, \Lambda_f(\D) \,\in\, L^2_\diamond(\partial\Omega)
\edm
of the electrostatic potential with respect to perturbations of $\D$.
To be specific, assume that a vector field $h:\partial\D\to\R^2$ with
$h\in\S$
is prescribed, and define the perturbation
\bdm
   \Gamma_{h} \,=\, \bigl\{x+h(x)\,:\, x\in\partial\D\}
\edm
of $\partial\D$. 
Then $\Gamma_h$ is a Jordan curve in $\Omega$ for $h$ sufficiently small, 
and we denote its interior domain by $\D_h$. 
Let $u_h$ be the solution of the corresponding 
problem~\req{conductivity-equation} with $\D$ replaced by $\D_h$; 
then the so-called shape derivative $u_h'$ of the electric potential
$u=u(\D)$ in direction $h$ is given by
\be{defdomderiv}
   u_h' \,=\, \lim_{t\to 0} \frac{u_{th}-u}{t} \qquad \text{in $\Omega$}\,,
\ee
provided this limit exists. Likewise,
\bdm
   \partial\Lambda_f(\D)h 
   \,=\, \lim_{t\to 0} \frac{1}{t}\bigl(\Lambda_f(\D_{th})-\Lambda_f(\D)\bigr)
\edm
is the shape derivative of $\Lambda_f(\D)$ in direction $h$, if the
latter limit exists.

Under the assumption that the boundary of $\D$ is smooth,
Hettlich and Rundell~\cite{HeRu98} 
(see also Afraites, Dambrine, and Kateb~\cite{ADK07}) 
established the existence of $u_h'$ and showed that it is the unique solution
of the
inhomogeneous transmission problem
\be{HeRu98}
\begin{array}{c}
   \Delta u_h' \,=\, 0 \quad \text{in $\Omega\setminus\partial\D$}\,, \qquad
   \dfrac{\partial}{\partial\nu}u_h' \,=\, 0 \quad \text{on $\partial\Omega$}\,,
   \qquad 
   {\displaystyle \int_{\partial\Omega} u_h'\ds \,=\, 0\,,}
   \\[3ex]
   \bigl[u_h'\bigr]_{\partial\D} 
   \,=\, (1-k)(h\cdot\nu)\dfrac{\partial}{\partial\nu}u^{-}\,, \quad
   \bigl[D_\nu u_h'\bigr]_{\partial\D} 
   \,=\, (1-k)\,\dfrac{\partial}{\partial\tau}
                \Bigl((h\cdot\nu)\dfrac{\partial}{\partial\tau}u\Bigr)\,.
\end{array}
\ee
Due to the $H^2$-regularity of the forward problem at either side of
the smooth boundary of $\D$, compare, e.g., McLean~\cite[Theorem~4.20]{McLe00}, 
the inhomogeneous data for this problem belong
to the appropriate Sobelev spaces $H^{\pm1/2}(\partial\D)$, and this garantees
existence of a unique weak solution of \req{HeRu98}, whose
restriction to $\Omega\setminus\overline\D$ belongs to $H^1$, so that its 
trace $u'|_{\partial\Omega}\in L^2_\diamond(\partial\Omega)$ is well-defined.
If $\D$ is merely a Lipschitz domain, then this chain of arguments is no 
longer valid, and it is not immediately clear, whether the transmission 
problem~\req{HeRu98} admits a solution, and if so, in which space.
And even if there is a unique solution of \req{HeRu98}, one may still wonder
whether this solution is the shape derivative \req{defdomderiv} of $u$.

As a step towards less regular inclusions
Beretta, Francini, and Vessella~\cite{BFV17}
studied the conductivity equation~\req{conductivity-equation} with
a polygonal anomaly and a perturbation field $h$, whose vector components
are linear splines over $\partial\D$ with their nodes attached to the vertices 
of $\D$; i.e., both components of $h$ are affine linear on each of the edges 
of $\D$ and continuous at the vertices. With such a field the perturbed
domains $\D_h$ also are polygons -- at least for $h$ sufficiently small.
It is proved in \cite{BFV17} that the associated shape derivative 
$\partial\Lambda_f(\D)$ of $\Lambda_f$ at $\D$ exists in a Fr\'echet sense; 
more precisely they showed that
\be{Frechet-bound}
   \norm{\Lambda_f(\D_h) - \Lambda_f(\D) 
         - \partial\Lambda_f(\D)h}_{L^2(\partial\Omega)}
   \,\leq\, C\thsp \norm{h}^{1+\delta}
\ee
for $h$ sufficiently small, where the constants $C>0$ and $\delta\in(0,1)$ 
are independent of the particular choice of $h$. 
(Note that in \req{Frechet-bound} it is irrelevant which norm of $h$ is used, 
because the admissible spline functions from \cite{BFV17}
constitute a finite dimensional vector space.) 
Finally, the authors of \cite{BFV17} came up with a weak (variational) 
definition of this shape derivative, 
namely\footnote{Beware of the sign error in \cite[Theorem~4.6]{BFV17}.}
\be{BFV17}
   \scalp{\partial\Lambda_f(\D)h,g}_{L^2(\partial\Omega)}
   \,=\, (1-k)\int_{\partial\D} (h\cdot\nu)\,\nabla u^-\cdot(M\nabla v_g^-)\ds 
\ee
for all $g\in L^2_\diamond(\partial\Omega)$, where 
\bdm
   v_g \,=\, \begin{cases} 
                v_g^- & \text{in $\D$}\,, \\ 
                v_g^+ & \text{in $\Omega\setminus\overline\D$}\,,
             \end{cases}
\edm
is the corresponding solution in $H^1_\diamond(\Omega)$ of the 
conductivity equation
\be{vg-pde}
   \nabla\cdot (\sigma \nabla v_g) \,=\, 0  \quad \text{in $\Omega$}\,, \qquad
   \frac{\partial}{\partial\nu} v_g \,=\, g \quad \text{on $\partial\Omega$}\,,
\ee
with driving current $g$, and $M$ is the symmetric $2\times 2$-matrix 
with eigenvalues $1$ and $k$ and corresponding eigenvectors
in tangential and normal directions, respectively;
see \req{Heps} below for a justification that the expression~\req{BFV17}
is well-defined. 

We mention that the analysis in \cite{BFV17} avoids the
use of the transmission problem~\req{HeRu98} and the general theory of 
shape derivatives. An alternative derivation of \req{BFV17}
-- valid for general vector fields $h\in\S$ --
on the grounds of this latter theory was subsequently handed in by 
Beretta, Micheletti, Perotto, and Santacesaria~\cite{BMPS18}, who determined
the so-called material derivative $\upunkt$ of $u$ in $\Omega$, 
cf.~\req{upunkt} below, but the
existence of the shape derivative $u'$ of $u$ remained unsettled.
In Section~\ref{Sec:estimates} we will also exploit the material derivative 
to show that the estimate~\req{Frechet-bound} for the Taylor remainder of the 
shape derivative of the boundary data can be improved to the order 
$\norm{h}^2$. 

In Sections~\ref{Sec:HeRu98} and \ref{Sec:domderiv} we analyze the
transmission problem~\req{HeRu98} for polygonal inclusions
as treated in \cite{BFV17,BMPS18}, and we prove that it has a 
unique solution $u_h'$. 
In contrast to the smooth case its restrictions to $\D$ and to
$\Omega\setminus\overline\D$ merely belong to some Sobolev space $H^\gamma$, 
where $\gamma\in(1/2,1)$ depends on the interior angles of the vertices 
of $\D$. We further show that $u_h'$ has a well-defined trace 
which satisfies the same variational problem~\req{BFV17} 
as $\partial\Lambda_f(\D)h$; from this we then conclude
that the solution of the transmission problem is the shape derivative of $u$ 
in $\Omega$, i.e.,  
that the main result by Hettlich and Rundell carries over to 
polygonal inclusions. 

\section{The conductivity problem with polygonal inclusions}
\label{Sec:forward}
We recapitulate some basic facts about 
the two-dimensional conductivity problem~\req{conductivity-equation} 
with a polygonal inclusion $\D$ with $n$ vertices, $n\geq 3$,
and a simply connected closure $\overline\D\subset\Omega$. 

We start with some notation:
We denote the (relatively open) edges of $\D$ by $\Gamma_i$ and its vertices 
by $x_i$, $i=1,\dots,n$, with the convention that $x_i$ connects $\Gamma_i$ 
and $\Gamma_{i+1}$, where we identify $\Gamma_{n+1}$ with $\Gamma_1$;
likewise we identify $x_{n+1}$ with $x_1$, when necessary.
On $\Gamma_i$ the tangent vector $\tau$ is pointing in the direction of $x_i$.
The interior angle of $\D$ at $x_i$ is denoted by 
$\alpha_i\in(0,2\pi)\setminus\{\pi\}$, and we use a local coordinate system
\be{localcoordinates}
   x \,=\, x_i \,+\, 
           \bigl(r\cos(\theta_i+\theta),r\sin(\theta_i+\theta)\bigr)\,, \qquad 
   0<r<r_i\,, \ 0\leq \theta < 2\pi\,,
\ee
near $x=x_i$, where $\theta_i$ is such that $\theta=0$ corresponds to points on
$\Gamma_{i+1}$, $\theta=\alpha_i$ for $x\in\Gamma_i$, and the range 
$0<\theta<\alpha_i$ corresponds to points $x\in\D$, while the interval 
$\alpha_i<\theta<2\pi$ corresponds to points 
$x\in\Omega\setminus\overline\D$. We will also make use of 
cut-off functions $\chi_i=\chi_i(x)\in C_0^\infty(\Omega)$, 
which only depend on the distance $|x-x_i|$, are one near $x=x_i$ and
zero for $|x-x_i|\geq r_i$. Without loss of generality we assume that the
radius $r_i$ is so small that the supports of any two cut-off functions 
$\chi_i$ and $\chi_j$ with $i\neq j$ have no points in common, 
and that the intersection of $\partial\D$ with 
the support of $\chi_i$ is a subset of $\Gamma_i\cup\{x_i\}\cup\Gamma_{i+1}$.

Since the two components $u^\pm$ of the solution $u$ of 
\req{conductivity-equation} are harmonic and satisfy the
transmission conditions~\req{Bnu} they can both be continued as harmonic
functions across each of the edges $\Gamma_i$ of $\D$ by 
appropriate reflections; compare, e.g., the argument in the proof of
Lemma~5.2 in Friedman and Isakov~\cite{FrIs89}.
This proves that all their derivatives extend continuously to every edge
from either side. In particular this implies that $u^+|_{\Gamma_i}=u^-|_{\Gamma_i}$
is a well-defined $C^\infty$ function; we therefore drop the superscripts $\pm$
when dealing with this trace.

We quote from Bellout, Friedman, and Isakov~\cite{BFI92} that near a fixed
vertex $x_i$ of $\D$, $i\in\{1,\dots,n\}$, the potential $u$ satisfies
\be{BFI92}
   u(x) \,=\, u(x_i) \,+\, 
              \sum_{j=1}^\infty \beta_{ij}\, y_{ij}(\theta)\,r^{\gamma_{ij}}\,,
\ee
when using the local coordinate system \req{localcoordinates} for
\bdm
   x\in\B_{r_i}(x_i) \,=\, \bigl\{x\,:\, |x-x_i|<r_i\bigr\}\,.
\edm
Together with the constant function $y_{i0}$
the functions $y_{ij}\in H^1(0,2\pi)$ in \req{BFI92} are the eigenfunctions
of the eigenvalue problem
\begin{subequations}
\label{eq:diffeq-ang}
\be{diffeq-ang-op}
   -y''=\gamma^2y \qquad \text{in $(0,2\pi)\setminus\{\alpha_i\}$}\,,
\ee
subject to the transmission conditions
\be{diff-eq-ang-transmission}
\begin{aligned}
   &\phantom{k'\,}y(0+)=y(2\pi-)\,, & \quad 
   &\phantom{k'\,}y(\alpha_i-)=y(\alpha_i+)\,,  \\[1ex]
   &k\,y'(0+)=y'(2\pi-)\,,& \quad &k\,y'(\alpha_i-)=y'(\alpha_i+)\,,
\end{aligned}
\ee
\end{subequations}   
$\gamma_{ij}^2$ are the corresponding eigenvalues in increasing order,
and $\gamma_{ij}$ its square roots.
The latter are the nonnegative solutions of the nonlinear equation
\be{gamma-cond}
   \bigl|\sin\gamma_{ij}(\alpha_i-\pi)\bigr| \,=\, 
   \lambda\,\bigl|\sin\gamma_{ij}\pi\bigr|\,,
   \qquad \lambda\,=\, \Bigl|\frac{k+1}{k-1}\Bigr|\,.
\ee
There holds
\be{gamma}
   \gamma_{i0}\,=\, 0\,, \quad \frac{1}{2} \,<\, \gamma_{i1} \,<\, 1\,, \quad
   \text{and} \quad 
   \gamma_{i2}\,>\, 1\,,
\ee
and the eigenfunctions have the form
\bdm
   y_{ij}(\theta) \,=\, 
   \begin{cases}
      A_{ij}^- \cos\gamma_{ij}\theta\,+\,B_{ij}^- \sin\gamma_{ij}\theta\,,
      & \quad \phantom{\alpha_i}0<\theta<\alpha_i\,,\\[1ex]
      A_{ij}^+ \cos\gamma_{ij}\theta\,+\,B_{ij}^+ \sin\gamma_{ij}\theta\,,
      & \quad \phantom{0}\alpha_i<\theta<2\pi\,,
   \end{cases}
\edm
with appropriate values of $A_{ij}^\pm$ and $B_{ij}^\pm$.
We normalize these eigenfunctions in such a way that they define 
an orthonormal basis 
with respect to the inner product
\bdm
   \scalp{y,\widetilde y}
   \,=\, \int_0^{\alpha_i} k\,y(\theta)\widetilde y(\theta)\dtheta
         \,+\, \int_{\alpha_i}^{2\pi} y(\theta)\widetilde y(\theta)\dtheta\,.
\edm
From the transmission conditions~\req{diff-eq-ang-transmission} it follows 
that
\be{linalg}
   \begin{cmatrix}
      1 & 0 & -\cos 2\pi\gamma & -\sin 2\pi\gamma \\
      0 & k & \phantom{-}\sin 2\pi\gamma & -\cos 2\pi\gamma \\
      \phantom{-k}\cos\gamma\alpha & \phantom{k}\sin\gamma\alpha 
            & -\cos\gamma\alpha & -\sin\gamma\alpha \\
      -k\sin\gamma\alpha & k\cos\gamma\alpha 
            & \phantom{-}\sin\gamma\alpha & -\cos\gamma\alpha
   \end{cmatrix}
   \begin{cmatrix} A^- \\ B^- \\ A^+ \\ B^+ \end{cmatrix}
   \,=\,
   \begin{cmatrix} 0 \\ 0 \\ 0 \\ 0 \end{cmatrix},
\ee
where we have omitted all subscripts $i$ and $j$ for the ease of readability;
in order that this system has a nontrivial solution 
the matrix $Y\in\R^{4\times 4}$ in \req{linalg} must be singular. 
In fact, we state the following result, which is implicit in Seo~\cite{Seo96}.

\begin{lemma}
\label{Lem:Y}
Let $Y=Y(\gamma,\alpha)$ be given by the $4\times 4$-matrix in \req{linalg}
with $\alpha\in(0,2\pi)\setminus\{\pi\}$ and $\gamma>0$.
Then $Y$ is singular, if and only if $\gamma$ is a solution of
\be{Lem:Y}
   \bigl|\sin(\gamma(\alpha-\pi))\bigr| \,=\, 
   \lambda\,\bigl|\sin\gamma\pi\bigr|
\ee
with $\lambda$ as in \req{gamma-cond}. If $\alpha$ and $\gamma$ satisfy
\req{Lem:Y} then $\operatorname{rank}(Y)=3$, unless $\gamma\in\N\setminus\{1\}$
and $\alpha=l\pi/\gamma$ for some 
$l\in\{1,\dots,2\gamma-1\}\setminus\{\gamma\}$,
in which case $\operatorname{rank}(Y)=2$ and the corresponding eigenvalue
problem~\req{diffeq-ang} has a two dimensional eigenspace.
\end{lemma}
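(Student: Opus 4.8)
The plan is to exploit a $2\times2$ block structure of $Y$. Let $R(\phi)=\left(\begin{smallmatrix}\cos\phi&\sin\phi\\-\sin\phi&\cos\phi\end{smallmatrix}\right)$ and $D=\operatorname{diag}(1,k)$. Ordering the unknowns as $(A^-,B^-,A^+,B^+)^{\mathsf T}$, the matrix in \req{linalg} is exactly
\[
   Y=\begin{pmatrix} D & -R(2\pi\gamma)\\ D\,R(\gamma\alpha) & -R(\gamma\alpha)\end{pmatrix},
\]
the first block row encoding the transmission conditions at the seam $\theta\equiv 0$ and the second those at the interface $\theta=\alpha$. Since the lower-right block $-R(\gamma\alpha)$ is invertible, block (Schur) elimination gives $\operatorname{rank}(Y)=2+\operatorname{rank}(Z)$ with Schur complement
\[
   Z \,=\, D-R(\gamma(2\pi-\alpha))\,D\,R(\gamma\alpha).
\]
Consequently $Y$ is singular iff $\det Z=0$; moreover $\operatorname{rank}(Y)=3$ exactly when $Z\neq0$ but $\det Z=0$, and $\operatorname{rank}(Y)=2$ exactly when $Z=0$. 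The whole lemma thus reduces to an analysis of the single $2\times2$ matrix $Z$.

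To evaluate $\det Z$ I would split $D=aI+b\sigma$ with $a=\tfrac{k+1}{2}$, $b=\tfrac{1-k}{2}$ and $\sigma=\operatorname{diag}(1,-1)$, and use the reflection identity $R(\delta)\sigma R(\beta)=R(\delta-\beta)\sigma$. With $\delta=\gamma(2\pi-\alpha)$ and $\beta=\gamma\alpha$ this turns the conjugation into $R(\delta)DR(\beta)=a\,R(2\pi\gamma)+b\,R(2\gamma(\pi-\alpha))\sigma$, so that
\[
   Z \,=\, a\bigl(I-R(2\pi\gamma)\bigr)+b\bigl(I-R(2\gamma(\pi-\alpha))\bigr)\sigma .
\]
Expanding in the basis $\{I,K,\sigma,K\sigma\}$ with $K=\left(\begin{smallmatrix}0&1\\-1&0\end{smallmatrix}\right)$ and using $(1-\cos\varphi)^2+\sin^2\varphi=4\sin^2(\varphi/2)$, a short computation gives the closed form
\[
   \det Z \,=\, (k+1)^2\sin^2(\gamma\pi)-(k-1)^2\sin^2\bigl(\gamma(\pi-\alpha)\bigr).
\]
Because $\sin(\gamma(\pi-\alpha))=-\sin(\gamma(\alpha-\pi))$ and $(k+1)^2/(k-1)^2=\lambda^2$, this vanishes precisely when $|\sin(\gamma(\alpha-\pi))|=\lambda|\sin\gamma\pi|$, i.e.\ \req{Lem:Y}. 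Together with the rank reduction this already yields both the singularity criterion and the generic value $\operatorname{rank}(Y)=3$.

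It remains to pin down the rank-$2$ case $Z=0$. From the same expansion, $Z=0$ is equivalent to $a(1-\cos2\pi\gamma)=a\sin2\pi\gamma=0$ together with $b(1-\cos2\gamma(\pi-\alpha))=b\sin2\gamma(\pi-\alpha)=0$; since $a,b\neq0$ (as $k>0$, $k\neq1$) these collapse to $\cos2\pi\gamma=1$ and $\cos2\gamma(\pi-\alpha)=1$. The first forces $\gamma\in\N$, and then the second forces $\gamma\alpha\in\pi\mathbb{Z}$, say $\alpha=l\pi/\gamma$ with $l\in\mathbb{Z}$. The requirement $\alpha\in(0,2\pi)$ confines $l$ to $\{1,\dots,2\gamma-1\}$, the standing hypothesis $\alpha\neq\pi$ removes $l=\gamma$, and for $\gamma=1$ the surviving index set is empty; this is exactly the exceptional set in the statement, where $\dim\ker Y=2$ is the advertised two-dimensional eigenspace of \req{diffeq-ang}. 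I expect the main technical work to be the determinant computation of the previous paragraph — keeping the reflection identity and the basis expansion organised so that the $4\sin^2(\varphi/2)$ collapse is transparent — while the Schur reduction and the analysis of $Z=0$ are then routine.
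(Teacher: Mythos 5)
Your proposal is correct and takes essentially the same route as the paper: both exploit the block structure $Y=\bigl(\begin{smallmatrix} D & -R(2\pi\gamma)\\ DR(\gamma\alpha) & -R(\gamma\alpha)\end{smallmatrix}\bigr)$ and reduce the singularity and rank questions to a $2\times 2$ Schur complement, whose determinant vanishing gives \req{Lem:Y} and whose complete vanishing characterizes the rank-two case with its two-dimensional eigenspace. The only differences are organizational --- you pivot on the always-invertible rotation block $-R(\gamma\alpha)$ rather than on $D$, and you evaluate the determinant via the split $D=aI+b\,\operatorname{diag}(1,-1)$ and the reflection identity, where the paper instead uses a symmetric factorization of its Schur complement $S=DR(\gamma\alpha)D^{-1}R(2\pi\gamma)-R(\gamma\alpha)$ --- and both computations lead to the same criterion and the same exceptional set.
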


\begin{proof}
Since we need this result in a slightly different context below 
(see \req{linalg-inhomogen0}) we provide a sketch of its proof. Introducing 
\bdm
   D \,=\, \begin{cmatrix} 1 & 0 \\ 0 & k \end{cmatrix}
   \qquad \text{and} \qquad
   R(\omega) \,=\, \begin{cmatrix}
                      \phantom{-}\cos\omega & \sin\omega \\
                      -\sin\omega & \cos\omega
                   \end{cmatrix}
\edm
we can write
\bdm
   Y \,=\,
   \begin{cmatrix} 
      D & -R(2\pi\gamma) \\ DR(\gamma\alpha) & -R(\gamma\alpha)
   \end{cmatrix},
\edm
and since its $(1,1)$-block $D$ is nonsingular, $Y$ is singular, 
if and only if the Schur complement
\bdmal
   S &\,=\, DR(\gamma\alpha)D^{-1}R(2\pi\gamma) - R(\gamma\alpha)\\[1ex]
     &\,=\, DR(\gamma\pi)
            \Bigl(R(\gamma(\alpha-\pi))D^{-1}R(\gamma\pi) 
                  \,-\, R(-\gamma\pi)D^{-1}R(\gamma(\alpha-\pi)\Bigr)
            R(\gamma\pi)
\edmal
of $D$ in $Y(\gamma,\alpha)$ is singular. 
Computing the products in the inner paranthesis and taking
the determinant of their difference it is readily seen that $S$
is singular, if and only if $\gamma$ is a solution of \req{Lem:Y}.
The Schur complement $S$ vanishes completely, if and only if
$\gamma\in\N$ and $\gamma\alpha$ 
is an integer multiple of $\pi$, in which case $y(\theta)=\cos\gamma\theta$
is one associated eigenfunction of \req{diffeq-ang}, 
and $y(\theta)=B^\pm\sin\gamma\theta$ with $B^+=kB^-$ is a second one.
\end{proof}

As mentioned in \cite{BFI92} the series~\req{BFI92} can be 
differentiated termwise to any order with respect to $\theta$ and $r$ 
(with one-sided derivatives at $\theta=\alpha_i,0,2\pi$, respectively). 
We thus compute
\begin{subequations}
\label{eq:normalderiv}
\begin{align}
\label{eq:normalderiv-a}
   \frac{\partial}{\partial\nu} u^-(x)
   &\,=\, -\frac{1}{r}\frac{\partial}{\partial\theta} u^-(x)
    \,=\, -\sum_{j=1}^\infty \beta_{ij}\gamma_{ij}B_{ij}^- r^{\gamma_{ij}-1}\,, 
    \qquad x\in\Gamma_{i+1}\,,
\intertext{and} 
\label{eq:normalderiv-b}
   \frac{\partial}{\partial\nu} u^-(x)
   &\,=\, \frac{1}{r}\frac{\partial}{\partial\theta} u^-(x)
    \,=\, \sum_{j=1}^\infty \beta_{ij}\gamma_{ij}
                      (B_{ij}^-c_{ij} - A_{ij}^-s_{ij})r^{\gamma_{ij}-1}\,, 
    \qquad x\in\Gamma_i\,,
\end{align}
\end{subequations}
for $x$ close to $x_i$, where we have set
\bdm
   c_{ij} = \cos\gamma_{ij}\alpha_i \qquad \text{and} \qquad
   s_{ij} = \sin\gamma_{ij}\alpha_i\,.
\edm
Likewise, we have
\begin{subequations}
\label{eq:tanderiv}
\begin{align}
\label{eq:tanderiv-a}
   \frac{\partial}{\partial\tau} u(x)
   &\,=\, \frac{\partial}{\partial r} u(x)
    \,=\, \sum_{j=1}^\infty \beta_{ij}\gamma_{ij}A_{ij}^- r^{\gamma_{ij}-1}\,,
    \qquad x\in\Gamma_{i+1}\,,
\intertext{and} 
\label{eq:tanderiv-b}
   \frac{\partial}{\partial\tau} u(x)
   &\,=\, -\frac{\partial}{\partial r} u(x)
    \,=\, -\sum_{j=1}^\infty 
           \beta_{ij}\gamma_{ij}(A_{ij}^-c_{ij} + B_{ij}^-s_{ij}) r^{\gamma_{ij}-1}\,,
    \qquad x\in\Gamma_i\,,
\end{align}
\end{subequations}
near $x=x_i$.

From \req{gamma}, \req{normalderiv}, \req{tanderiv}, and \req{Bnu}, 
and from the fact that the gradient of $u$ has continuous extensions 
from either side to every edge of $\partial\D$ it follows that
\be{Heps}
   \frac{\partial}{\partial\nu} u^\pm\Big|_{\partial\D}\,, \ 
   \frac{\partial}{\partial\tau} u\Big|_{\partial\D}
   \,\in\,L^2(\partial\D)\,,
\ee
and, of course, the same is true for every solution $v_g$ of \req{vg-pde};
compare also Escauriaza, Fabes, and Verchota~\cite{EFV92}.
Accordingly, the right-hand side of \req{BFV17} is well-defined.

\section{The shape derivative of \boldmath$u=u(\D)$ for a polygonal
inclusion in $\Omega$}
\label{Sec:estimates}
In the sequel we investigate the existence of the shape derivative of
the solution $u=u(\D)$ of the conductivity equation~\req{conductivity-equation}
at a polygonal inclusion $\D$.
We also derive bounds for the corresponding shape derivative 
$\partial\Lambda_f(\D)$ and its associated Taylor remainder,
the latter improving upon the estimate~\req{Frechet-bound} 
provided in \cite{BFV17}.

Before doing so we first discuss how to extend a given vector field $h\in\S$ 
to a vector field
\bdm
   H \,\in\, \W \,=\,
   \bigl\{\,H\in W^{1,\infty}(\Omega)\,:\,\supp H \subset\Omega\,\bigr\}
\edm
which is defined in all of $\Omega$ and satisfies $H|_{\partial\D}=h$. 
To this end we first choose a polygonal domain $\D_1$ with vertices 
$x_i^{(1)}\in\D$ in the vicinity of $x_i$, $i=1,\dots,n$, respectively,
such that $\overline\D_1\subset\D$ and each (open) quadrangle $\Q_i$ with 
vertices $x_i,x_{i+1},x_i^{(1)}$, and $x_{i+1}^{(1)}$ is convex; 
using the associated convex representation
\bdm
   x \,=\, c(1-d)\thsp x_i \,+\, cd\, x_{i+1} \,+\, 
           (1-c)(1-d)\thsp x_i^{(1)} \,+\, (1-c)d\, x_{i+1}^{(1)} 
\edm
with $0<c,d<1$ of a general point $x\in\Q_i$
we can extend $h$ to $\Q_i$ via
\bdm
   H(x) \,=\, c\, h\bigl((1-d)\thsp x_i \,+\, d\, x_{i+1}\bigr)\,, \qquad 
   x\in\Q_i\,,
\edm
and in a second step, extend $H$ continuously by zero to $\overline\D_1$.
In the same manner we can construct a polygonal domain $\D_2$ with
$\overline\D\subset\D_2$ and $\overline\D_2\subset\Omega$ and a corresponding
extension of $H$ to $\D_2$, and finally extend $H$ by zero to the rest
of $\Omega$. Note that $h\mapsto H$ is a bounded linear map, i.e., 
there exists a constant $\CT \geq 1$, independent of $h$, 
such that the extended vector field satisfies
\be{CT}
   \norm{H}_{W^{1,\infty}(\Omega)} \,\leq\, \CT\,\norm{h}_{W^{1,\infty}(\partial\D)}\,.
\ee

Given this extension $H$ of $h$ we
define
\be{Phi}
   \PhiG_{H}(x) \,=\, x \,+\, H(x)\,, \qquad x\in\Omega\,,
\ee
and note that $\PhiG_{H}:\Omega\to\Omega$ is bijective, if, for example,
\be{t0}
   \norm{H}_{W^{1,\infty}(\Omega)} 
   \,\leq\, 1/2\,,
\ee
by virtue of Banach's fixed point theorem.
Due to \req{CT} the condition~\req{t0} is satisfied for $h$
sufficiently small.

We now follow the standard argumentation from \cite{SoZo92}, which has been
worked out in \cite{BMPS18} for the present application. 
Let $H\in\W$ be defined as above and satisfy \req{t0}.
Denote by $u_h\in H^1_\diamond(\Omega)$ the weak solution of the 
forward problem
\bdm
   \nabla\cdot (\sigma_h \nabla u_h) \,=\, 0  \quad \text{in $\Omega$}\,, 
   \qquad
   \frac{\partial}{\partial\nu} u_h \,=\, f \quad \text{on $\partial\Omega$}\,,
\edm
where
\bdm
   \sigma_h(x) \,=\, \begin{cases}
                        k\,, & \text{for $x\in\D_h$}\,,\\
                        1\,, & \text{for $x\in\Omega\setminus\overline\D_h$}\,.
                     \end{cases}
\edm
Note that $\D_h=\Phi_H(\D)$. Therefore, when introducing
\bdm
   \utilde_{H} \,=\, u_h\circ\PhiG_{H} \,:\, \Omega\to\R\,,
\edm
it has been shown in \cite{BMPS18} that the \emph{material derivative}
\be{upunkt}
   \upunkt_{H} \,=\, \lim_{t\to 0} \frac{\utilde_{tH}-u}{t} 
   \,\in\, H^1_\diamond(\Omega)
\ee
is well-defined, the convergence being in $H^1(\Omega)$. Morover,
$\upunkt_{H}$ satisfies the variational problem
\be{upunkt-pde}
   \int_\Omega \sigma\,\nabla\upunkt_{H}\cdot\nabla w\dx
   \,=\, -\int_\Omega \sigma\,\nabla u\cdot ({\cal A}_{H}\nabla w)\dx \qquad
   \text{for every $w\in H^1(\Omega)$}\,,
\ee
where $\sigma$ is as in \req{sigma} and
\bdm
   {\cal A}_{H} \,=\, (\nabla\cdot H)I \,-\, DH \,-\, (DH)^T
\edm
-- with $DH$ being the Jacobi matrix of $H$ --
is an $L^\infty$ function of $2\times 2$ matrices with
\be{Ahbound}
   \norm{{\cal A}_{H}}_{L^\infty(\Omega)} 
   \,=\, \sup_{x\in\Omega} \norm{{\cal A}_{H}(x)}_2
   \,\leq\, 4\,\norm{H}_{W^{1,\infty}(\Omega)}\,.
\ee
It has further been proved in \cite{BMPS18} that 
$\upunkt_{H}|_{\partial\Omega}$ 
coincides with the shape derivative of $\Lambda_f(\D)$ in the direction
of $h=H|_{\partial\D}$, i.e.,
\be{BMPS18}
   \scalp{\upunkt_{H},g}_{L^2(\partial\Omega)} 
   \,=\, \scalp{\partial\Lambda_f(\D)h,g}_{L^2(\partial\Omega)}
\ee
is given by \req{BFV17} for every $g\in L^2_\diamond(\partial\Omega)$. 

%

The approach from \cite{BMPS18} via the material derivative
allows for a stronger result than \req{Frechet-bound}.

\begin{theorem}
\label{Thm:Frechet-bound}
Let $\D$ be a polygon with simply connected closure $\overline\D\subset\Omega$.
Then the material derivative in ${\cal L}(\W,H_\diamond^1(\Omega))$
is a Fr\'echet derivative. Moreover, 
as far as the shape derivative $\partial\Lambda_f(\D)$ is concerned,
there are constants $c,C>0$, 
which only depend on $\Omega$, $\D$, $f$, and $k$, such that
\be{Thm:Frechet-bound}
   \norm{\Lambda_f(\D_h)-\Lambda_f(\D)-\partial\Lambda_f(\D)h}_{L^2(\partial\Omega)}
   \,\leq\, C\,\norm{h}_{W^{1,\infty}(\partial\D)}^2
\ee
for every $h\in\S$ with $\norm{h}_{W^{1,\infty}(\partial\D)}\leq c$.
\end{theorem}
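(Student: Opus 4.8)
The plan is to pull the perturbed forward problem back to the fixed reference domain $\Omega$ and to compare the resulting variational problem with those satisfied by $u$ and by the material derivative $\upunkt_H$. First I would record the pulled-back equation: changing variables $y=\PhiG_H(x)$ in the weak form of the $\D_h$-problem and using that $\PhiG_H$ is the identity near $\partial\Omega$ — so that neither the Neumann datum $f$ nor, because $\D_h=\PhiG_H(\D)$, the conductivity $\sigma$ are affected — one finds that $\utilde_H=u_h\circ\PhiG_H\in H^1_\diamond(\Omega)$ solves
\bdm
   \int_\Omega \sigma\,\nabla\utilde_H\cdot(B_H\nabla w)\dx
   \,=\, \int_{\partial\Omega} fw\ds \qquad\text{for all $w\in H^1(\Omega)$,}
\edm
with the symmetric matrix field $B_H=\det(D\PhiG_H)\,(D\PhiG_H)^{-1}(D\PhiG_H)^{-T}$ and $D\PhiG_H=I+DH$. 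Because $\PhiG_H$ fixes $\partial\Omega$ pointwise we have the trace identities $\utilde_H|_{\partial\Omega}=\Lambda_f(\D_h)$ and, by \req{BMPS18}, $\upunkt_H|_{\partial\Omega}=\partial\Lambda_f(\D)h$, so that the remainder in \req{Thm:Frechet-bound} is precisely the boundary trace of $z_H:=\utilde_H-u-\upunkt_H\in H^1_\diamond(\Omega)$.

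The second step is a Taylor expansion of $B_H$ in $DH$. A direct computation gives $B_H=I+{\cal A}_H+R_H$, where the first-order term is exactly the matrix ${\cal A}_H=(\nabla\cdot H)I-DH-(DH)^T$ from \req{upunkt-pde}, and where, once $\norm{H}_{W^{1,\infty}(\Omega)}$ is small enough that $I+DH$ is uniformly invertible, the remainder satisfies $\norm{R_H}_{L^\infty(\Omega)}\leq C\,\norm{H}_{W^{1,\infty}(\Omega)}^2$. Alongside this I would extract two energy bounds from the pulled-back problem, using the uniform ellipticity of $B_H$ and the coercivity of $a(v,w)=\int_\Omega\sigma\,\nabla v\cdot\nabla w\dx$ on $H^1_\diamond(\Omega)$: testing with $w=\utilde_H$ gives a uniform bound $\norm{\utilde_H}_{H^1(\Omega)}\leq C$, and testing the difference of the $\utilde_H$- and $u$-problems with $w=\utilde_H-u$, together with $\norm{B_H-I}_{L^\infty(\Omega)}\leq C\,\norm{H}_{W^{1,\infty}(\Omega)}$, gives the Lipschitz estimate $\norm{\utilde_H-u}_{H^1(\Omega)}\leq C\,\norm{H}_{W^{1,\infty}(\Omega)}$.

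The heart of the argument is then a single energy estimate for $z_H$. Subtracting the $u$-problem from the $\utilde_H$-problem, rewriting $B_H-I={\cal A}_H+R_H$, and subtracting the material-derivative identity \req{upunkt-pde}, one obtains for every $w\in H^1(\Omega)$ that $\int_\Omega\sigma\,\nabla z_H\cdot\nabla w\dx$ equals the sum of the two remainder terms $-\int_\Omega\sigma\,\nabla(\utilde_H-u)\cdot({\cal A}_H\nabla w)\dx$ and $-\int_\Omega\sigma\,\nabla\utilde_H\cdot(R_H\nabla w)\dx$, the ${\cal A}_H$-contributions of $\utilde_H$ and of $u$ having combined into the first of these. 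Testing with $w=z_H$ and using coercivity on the left, the Lipschitz estimate together with $\norm{{\cal A}_H}_{L^\infty(\Omega)}\leq 4\,\norm{H}_{W^{1,\infty}(\Omega)}$ on the first term, and $\norm{R_H}_{L^\infty(\Omega)}\leq C\,\norm{H}_{W^{1,\infty}(\Omega)}^2$ together with the uniform bound on $\utilde_H$ on the second term, makes both right-hand contributions bounded by $C\,\norm{H}_{W^{1,\infty}(\Omega)}^2\,\norm{z_H}_{H^1(\Omega)}$, whence $\norm{z_H}_{H^1(\Omega)}\leq C\,\norm{H}_{W^{1,\infty}(\Omega)}^2$. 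Since $H\mapsto\upunkt_H$ is linear and (again by \req{upunkt-pde} and coercivity) bounded, this already identifies it as the Fr\'echet derivative of $H\mapsto\utilde_H$ at $H=0$ in ${\cal L}(\W,H^1_\diamond(\Omega))$; and applying the trace theorem to $z_H$ and then \req{CT} to replace $\norm{H}_{W^{1,\infty}(\Omega)}$ by $\norm{h}_{W^{1,\infty}(\partial\D)}$ yields \req{Thm:Frechet-bound}, with $c$ fixed so small (via \req{CT} and \req{t0}) that all smallness requirements on $H$ are met.

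I expect the main obstacle to be the Lipschitz estimate $\norm{\utilde_H-u}_{H^1(\Omega)}\leq C\,\norm{H}_{W^{1,\infty}(\Omega)}$: this is exactly what upgrades the cross term involving ${\cal A}_H$ from the mere $o(\norm{H}_{W^{1,\infty}(\Omega)})$ bound that would suffice for Fr\'echet differentiability to the quadratic bound demanded by \req{Thm:Frechet-bound}. The remaining ingredients — the change of variables on the Lipschitz domain, the matrix Taylor expansion, and the coercivity bookkeeping — are routine; notably, the corner singularities of $u$ play no role here, since the entire argument lives at the level of the $H^1$ energy, where only $\nabla u\in L^2(\Omega)$ is used.
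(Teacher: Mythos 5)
Your proposal is correct and follows essentially the same route as the paper: your pulled-back equation is the source of the paper's identity \req{BMPS18-2.11}, your expansion $B_H=I+{\cal A}_H+R_H$ is the paper's pair of bounds on $A_H-I$ and $A_H-I-{\cal A}_H$, and your energy identity for $z_H$ is algebraically identical to the paper's \req{Taylorrest} (substitute $I-A_H=-({\cal A}_H+R_H)$), after which both arguments run through the same Lipschitz estimate for $\utilde_H-u$, the test $w=z_H$, Poincar\'e, the trace theorem, and \req{CT}. You also correctly identify the Lipschitz estimate as the step that upgrades the remainder from $o(\norm{H}_{W^{1,\infty}(\Omega)})$ to the quadratic bound, exactly as in the paper.
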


\begin{proof}
Let $H\in\W$ satisfy \req{t0}.
Then the Jacobian $D\PhiG_H$ of $\PhiG_H$ is invertible, 
and it follows from \cite[Eq.~(2.11)]{BMPS18} that the auxiliary function 
$w_{H}=\utilde_{H}-u$ solves the variational problem
\be{BMPS18-2.11}
   \int_\Omega \sigma\, \nabla w_{H}\cdot (A_{H}\nabla w)\dx
   \,=\, \int_\Omega 
            \sigma\,\nabla u\cdot\bigl((I-A_{H})\nabla w\bigr)\dx
\ee
for every $w\in H^1(\Omega)$, where
\be{At}
   A_{H} \,=\, (D\PhiG_H)^{-1}(D\PhiG_H)^{-T}\det D\PhiG_H\,.
\ee
From \req{BMPS18-2.11} we deduce that
\bdmal
   \int_\Omega \sigma\,\nabla w_{H}\cdot\nabla w\dx
   &\,=\, \int_\Omega \sigma\,\nabla (\utilde_{H}-u)\cdot
                     \bigl((I-A_{H})\nabla w\bigr)\dx
          \,+\, \int_\Omega \sigma\,\nabla w_{H}\cdot(A_{H}\nabla w)\dx\\[1ex]
   &\,=\, \int_\Omega \sigma\,\nabla \utilde_{H}\cdot
                     \bigl((I-A_{H})\nabla w\bigr)\dx\,,
\edmal
and hence, \req{upunkt-pde} implies that the Taylor remainder
\bdm
   e_{H}\,=\, \utilde_{H}-u-\upunkt_{H} \,=\, w_{H}-\upunkt_{H}
\edm
of the material derivative satisfies
\be{Taylorrest}
\begin{aligned}
   &\!\!\!\int_\Omega \sigma\, \nabla e_{H}\cdot\nabla w\dx
    \,=\, \int_\Omega \sigma\,\nabla \utilde_{H}\cdot
                     \bigl((I-A_{H})\nabla w\bigr)\dx
          \,+\, \int_\Omega \sigma\,\nabla u\cdot({\cal A}_{H}\nabla w)\dx\\[1ex]
   &\qquad
    \,=\, \int_\Omega \sigma\,\nabla w_{H}\cdot \bigl((I-A_{H})\nabla w\bigr)\dx
          \,+\, \int_\Omega \sigma\,\nabla u \cdot
                   \bigl((I-A_{H}+{\cal A}_{H})\nabla w\bigr)\dx
\end{aligned}
\ee
for every $w\in H^1(\Omega)$. 

From \req{Phi} and \req{At} we have
\bdm
   A_{H} \,=\, (I+DH)^{-1}(I+(DH)^T)^{-1}(1+\nabla\cdot H+\det DH)\,,
\edm
and hence, using \req{t0}, it follows that there exists some constant $C'$,
which we take to be greater than one, such that
\bdm
   \norm{A_{H} - I}_{L^\infty(\Omega)} \,\leq\, C'\,\norm{H}_{W^{1,\infty}(\Omega)}
   \quad \text{and} \quad
   \norm{A_{H} - I - {\cal A}_{H}}_{L^\infty(\Omega)}
   \,\leq\, C'\,\norm{H}_{W^{1,\infty}(\Omega)}^2\,,
\edm
independent of the particular choice of $H$.
Accordingly, the right-hand side of \req{Taylorrest} can be estimated by
\bdm
   C'\bigl(\norm{\sqrt\sigma\,\nabla w_{H}}_{L^2(\Omega)} 
           \norm{H}_{W^{1,\infty}(\Omega)}
           \,+\, \norm{\sqrt\sigma\,\nabla u}_{L^2(\Omega)}
                 \norm{H}_{W^{1,\infty}(\Omega)}^2\bigr)\,
   \norm{\sqrt\sigma\,\nabla w}_{L^2(\Omega)}\,,
\edm
and using $w=e_{H}$ in \req{Taylorrest} we thus obtain the estimate
\be{Taylorrest-estimate-tmp}
\begin{aligned}
   &\norm{\sqrt\sigma\,\nabla e_{H}}_{L^2(\Omega)} \,\leq\, \\[1ex]
   &\qquad 
    C'\bigl(\norm{\sqrt\sigma\,\nabla w_H}_{L^2(\Omega)} \norm{H}_{W^{1,\infty}(\Omega)}
            \,+\, \norm{\sqrt\sigma\,\nabla u}_{L^2(\Omega)}
                  \norm{H}_{W^{1,\infty}(\Omega)}^2\bigr)
\end{aligned}
\ee
for the Taylor remainder. 

Similarly, we conclude from \req{BMPS18-2.11} and \req{At} that
\bdm
   \frac{1}{2}\,\norm{\sqrt\sigma\,\nabla w_{H}}_{L^2(\Omega)}
   \,\leq\, C'\,\norm{\sqrt\sigma\,\nabla u}_{L^2(\Omega)} 
                \norm{H}_{W^{1,\infty}(\Omega)}
\edm
for every $H$ satisfying
\be{h-bound2}
   \norm{H}_{W^{1,\infty}(\Omega)} \,\leq\, \frac{1}{2C'}\,,
\ee
whereas the weak form \req{forwardvarproblem} of \req{conductivity-equation} 
implies that
\be{norm-u}
\begin{aligned}
   \norm{\sqrt\sigma\,\nabla u}_{L^2(\Omega)}^2
   &\,\leq\,\norm{f}_{L^2(\partial\Omega)} \norm{u}_{L^2(\partial\Omega)}\\[1ex]
   &\,\leq\,\frac{c_\Omega}{\min\{1,\sqrt{k}\}}\,
            \norm{f}_{L^2(\partial\Omega)}\norm{\sqrt\sigma\,\nabla u}_{L^2(\Omega)}\,,
\end{aligned}
\ee
where $c_\Omega$ depends on the norm of the trace operator from $H^1(\Omega)$ to
$L^2(\partial\Omega)$ and the Poincar\'e constant for $H^1_\diamond(\Omega)$.
With these two estimates and another use of the Poincar\'e inequality
it follows from \req{Taylorrest-estimate-tmp} that
\be{Taylorrest-estimate} 
   \norm{e_{H}}_{H^1(\Omega)} \,\leq\, C'' \norm{H}_{W^{1,\infty}(\Omega)}^2
\ee
with a constant $C''$ which is independent of $H$, 
as long as $H$ satisfies \req{h-bound2}. 
This shows that the material derivative in
${\cal L}(\W,H_\diamond^1(\Omega))$ is a Fr\'echet derivative.

Concerning the second claim~\req{Thm:Frechet-bound}
let $h\in\S$ be a given field on the boundary of $\D$, and
$H\in\W$ be an extension of $h$ as constructed above.
From \req{CT} it follows that $H$ satisfies \req{h-bound2}, if
\bdm
   \norm{h}_{W^{1,\infty}(\partial\D)} \,\leq\, c\,:=\,\frac{1}{2C'\CT}\,.
\edm
Using this extended vector field the trace of the associated
material derivative $\upunkt_{H}$ coincides with 
$\partial\Lambda_f(\D)h$,
and hence, the left hand side of \req{Thm:Frechet-bound} is the norm of
the trace on $\partial\Omega$ of
\bdm
   e_{H} \,=\, \utilde_{H} \,-\, u \,-\, \upunkt_{H}\,.
\edm
Accordingly, the desired inequality~\req{Thm:Frechet-bound} is a consequence 
of \req{Taylorrest-estimate}, \req{CT}, and the trace theorem.
\end{proof}

We also have the following estimate that is of independent interest.

\begin{theorem}
\label{Thm:derivative-bound}
Let $\D$ be a polygon with simply connected closure $\overline\D\subset\Omega$.
Then there is a constant $C>0$, depending only on $\Omega$, $f$, and $k$, 
such that
\bdm
   \norm{\partial\Lambda_f(\D')}_{{\cal L}(\Sp,L^2(\partial\Omega))} \,\leq\, C
\edm
for all polygons $\D'$ with the same number of vertices, 
which are sufficiently close to those of $\D$.
\end{theorem}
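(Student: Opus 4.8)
The plan is to bound the operator norm directly through the \emph{material} derivative, whose energy norm can be controlled uniformly in $\D'$, thereby bypassing the corner analysis altogether. One could instead start from the variational formula~\req{BFV17} together with~\req{Heps}, but making $\norm{\nabla u}_{L^2(\partial\D')}$ and $\norm{\nabla v_g}_{L^2(\partial\D')}$ uniform would require tracking the corner exponents $\gamma_{i1}$ and the coefficients $\beta_{ij}$ across the whole family of polygons, which the energy route sidesteps. Fix a polygon $\D'$ with $n$ vertices close to those of $\D$, let $\sigma'$ be the associated conductivity~\req{sigma}, and let $u$ denote the forward potential~\req{conductivity-equation} for $\D'$. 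For $h\in\Sp$ I would pick an extension $H\in\W$ with $H|_{\partial\D'}=h$ (constructed below); by~\req{BMPS18} the shape derivative then satisfies $\partial\Lambda_f(\D')h=\upunkt_H|_{\partial\Omega}$, where $\upunkt_H\in H^1_\diamond(\Omega)$ solves~\req{upunkt-pde} with $\sigma$ replaced by $\sigma'$. Testing~\req{upunkt-pde} with $w=\upunkt_H$ and applying the Cauchy--Schwarz inequality with weight $\sigma'$ yields
\bdm
   \norm{\sqrt{\sigma'}\,\nabla\upunkt_H}_{L^2(\Omega)}
   \,\leq\, \norm{{\cal A}_H}_{L^\infty(\Omega)}\,
            \norm{\sqrt{\sigma'}\,\nabla u}_{L^2(\Omega)}\,.
\edm

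Each factor on the right, as well as the trace on $\partial\Omega$, is then bounded by quantities independent of $\D'$. Indeed,~\req{Ahbound} gives $\norm{{\cal A}_H}_{L^\infty(\Omega)}\leq 4\norm{H}_{W^{1,\infty}(\Omega)}$; the estimate~\req{norm-u}, whose constant $c_\Omega$ involves only the trace and Poincar\'e constants of $\Omega$, gives $\norm{\sqrt{\sigma'}\,\nabla u}_{L^2(\Omega)}\leq \tfrac{c_\Omega}{\min\{1,\sqrt k\}}\,\norm{f}_{L^2(\partial\Omega)}$ for \emph{every} admissible $\D'$ (no geometry of the inclusion enters); and since $\upunkt_H\in H^1_\diamond(\Omega)$, the trace theorem, the Poincar\'e inequality on $H^1_\diamond(\Omega)$, and the lower bound $\sigma'\geq\min\{1,k\}$ give $\norm{\upunkt_H}_{L^2(\partial\Omega)}\leq c_\Omega'\,\norm{\sqrt{\sigma'}\,\nabla\upunkt_H}_{L^2(\Omega)}$ with $c_\Omega'$ depending only on $\Omega$ and $k$. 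Combining the three bounds I obtain
\bdm
   \norm{\partial\Lambda_f(\D')h}_{L^2(\partial\Omega)}
   \,\leq\, C_0\,\norm{f}_{L^2(\partial\Omega)}\,
            \norm{H}_{W^{1,\infty}(\Omega)}\,, \qquad
   C_0 \,=\, \frac{4\,c_\Omega c_\Omega'}{\min\{1,\sqrt k\}}\,,
\edm
where $C_0$ depends only on $\Omega$ and $k$.

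It therefore only remains to extend $h$ to $H$ with $\norm{H}_{W^{1,\infty}(\Omega)}\leq C\,\norm{h}_{\Sp}$ and a constant $C$ that can be chosen \emph{independently of} $\D'$, and this is the step I expect to be the main obstacle: the explicit extension constructed in Section~\ref{Sec:estimates} has a constant $\CT$ in~\req{CT} that a priori depends on the geometry of $\D'$. To force uniformity I would fix a bi-Lipschitz homeomorphism $\PsiG=\PsiG_{\D'}:\Omega\to\Omega$ that is piecewise affine, carries the vertices and edges of $\D$ onto those of $\D'$, and equals the identity near $\partial\Omega$; for $\D'$ sufficiently close to $\D$ one has $\norm{\PsiG-\operatorname{id}}_{W^{1,\infty}(\Omega)}$ and $\norm{\PsiG^{-1}-\operatorname{id}}_{W^{1,\infty}(\Omega)}$ uniformly small, so that $\PsiG$ and $\PsiG^{-1}$ are uniformly bi-Lipschitz over the neighborhood. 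Pulling $h$ back to $\widetilde h=h\circ\PsiG|_{\partial\D}\in\S$, extending $\widetilde h$ to $\Omega$ with the \emph{fixed} extension operator for the reference polygon $\D$ (with constant $\CT$), and pushing the result forward by $\PsiG^{-1}$ produces an extension $H$ of $h$ whose $W^{1,\infty}(\Omega)$-norm is controlled by $\CT$ times the bi-Lipschitz constants of $\PsiG$; since these are uniformly bounded for $\D'$ near $\D$, the composite extension constant $C$ is uniform. Substituting this into the previous display gives the asserted bound $\norm{\partial\Lambda_f(\D')}_{{\cal L}(\Sp,L^2(\partial\Omega))}\leq C_0\,C\,\norm{f}_{L^2(\partial\Omega)}$, uniformly over all polygons $\D'$ with $n$ vertices sufficiently close to those of $\D$.
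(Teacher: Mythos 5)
Your proposal is correct, and its analytic core is the same as the paper's: both bound $\partial\Lambda_f(\D')h=\upunkt_{H}|_{\partial\Omega}$ via \req{BMPS18}, test \req{upunkt-pde} with $w=\upunkt_{H}$, and then combine \req{Ahbound}, \req{norm-u}, and the trace/Poincar\'e estimates, so that the entire burden falls, exactly as you identify, on producing an extension $h\mapsto H\in\W$ whose constant is uniform in $\D'$.

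Where you genuinely diverge is in that extension step. The paper gets uniformity directly from its own construction: the fixed polygons $\D_1$ and $\D_2$ still satisfy $\partial\D'\subset\D_2\setminus\overline\D_1$ for $\D'$ sufficiently close to $\D$, so the very same quadrangle-interpolation extension applies to $\D'$ verbatim, and its constant depends continuously on the vertex positions, whence $\CTp\leq 2\CT$ as in \req{CTp}. You instead transport to the reference polygon: pull $h$ back along a piecewise-affine, uniformly bi-Lipschitz homeomorphism $\PsiG_{\D'}$ carrying $\D$ onto $\D'$ and equal to the identity near $\partial\Omega$, apply the fixed extension operator for $\D$, and compose with $\PsiG_{\D'}^{-1}$. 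This is sound: the composite lies in $\W$, restricts to $h$ on $\partial\D'$, and the chain rule bounds its $W^{1,\infty}(\Omega)$-norm by $\CT$ times the bi-Lipschitz constants, which are uniformly bounded for $\D'$ near $\D$. Your device is the more portable one --- it would give uniform extension constants even in settings where a concrete construction does not transfer so literally --- at the price of having to build $\PsiG_{\D'}$, which the paper avoids. One cosmetic remark: the paper additionally normalizes $\norm{h}_{\Sp}=1/(4\CT)$ so that $H$ meets the smallness condition \req{t0}; you may omit this, as you implicitly do, since $\upunkt_{H}$ is defined by the variational problem \req{upunkt-pde} for arbitrary $H\in\W$ and both sides of your final estimate are homogeneous of degree one in $h$.
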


\begin{proof}
For the given polygon $\D$ we choose $\D_1$ and $\D_2$ 
for the extension $H\in\W$ of any $h\in\S$ as described at the beginning 
of this section.
Then every polygon $\D'$ which is a sufficiently small perturbation of $\D$ as
specified in the statement of this theorem satisfies 
$\partial\D'\subset\D_2\setminus\overline\D_1$, and every vector field
$h\in W^{1,\infty}(\partial\D')$ can also be extended to a vector field 
$H\in\W$ supported in $\overline\D_2$ 
using the very same construction.
Moreover, the corresponding extensions satisfy
\be{CTp}
   \norm{H}_{W^{1,\infty}(\Omega)} \,\leq\, \CTp\norm{h}_{\Sp}
   \,\leq\, 2\CT\,\norm{h}_{\Sp}\,,
\ee
provided $\D'$ is sufficiently close to $\D$.
In particular, if we stipulate that $\norm{h}_{\Sp}=1/(4\CT)$,
then $H$ satisfies \req{t0}.

It therefore follows from \req{BMPS18} and \req{upunkt-pde} with 
$w=\upunkt_{H}$ that
\bdmal
   \norm{\partial\Lambda_f(\D')h}_{L^2(\partial\Omega)}
   &\,\leq\, \frac{c_\Omega}{\min\{1,\sqrt{k}\}}\,
            \norm{\sqrt\sigma\,\nabla\upunkt_{H}}_{L^2(\Omega)} \\[1ex]
   &\,\leq\, \frac{c_\Omega}{\min\{1,\sqrt{k}\}}\,
             \norm{\sqrt\sigma\,\nabla u}_{L^2(\Omega)}
             \norm{{\cal A}_{H}}_{L^\infty(\Omega)}\,,
\edmal
where $c_\Omega$ is the same constant as in \req{norm-u}. 
From \req{norm-u}, \req{Ahbound}, and \req{CTp} we therefore deduce that
\bdmal
   \norm{\partial\Lambda_f(\D')h}_{L^2(\partial\Omega)}
   &\,\leq\, \frac{4 c_\Omega^2}{\min\{1,k\}}\, 
             \norm{f}_{L^2(\Omega)}\,\norm{H}_{W^{1,\infty}(\Omega)}\\[1ex]
   &\,\leq\, \frac{8 c_\Omega^2\,\CT}{\min\{1,k\}}\, 
             \norm{f}_{L^2(\Omega)}\,\norm{h}_{W^{1,\infty}(\partial\D')}\,.
\edmal
This proves the assertion.
\end{proof}

\begin{remark}
\label{Rem:uniformTaylorremainder}
\rm
In the same way one can show that the constants $c$ and $C$ of
Theorem~\ref{Thm:Frechet-bound} can be chosen in such a way that the same
Taylor remainder estimate~\req{Thm:Frechet-bound} is valid for all
polygons $\D'$ sufficiently close to $\D$.
\fin
\end{remark}

Now we turn to the existence of the shape derivative $u'_h$ of $u$,
formally defined in \req{defdomderiv}. 

\begin{theorem}
\label{Thm:domderiv}
Let $\D$ be a polygon with simply connected closure $\overline\D\subset\Omega$. 
Then the solution $u=u(\D)$ of \req{conductivity-equation} has a
Fr\'echet shape derivative 
$\partial u(\D)\in {\cal L}(\S,L^2(\Omega))$.
The derivative $u'_h=\partial u(\D)h$ of $u$ in the direction of
$h\in\S$ is given by
\be{Thm:domderiv}
   u_h' \,=\, \upunkt_{H} \,-\, H\cdot\nabla u
\ee
for any extension $H\in \W$ of $h$ as described in the beginning of
this section.
The function $u_h'$ is harmonic in $\Omega\setminus\partial\D$, 
and it belongs to $H^1(\Omega\setminus\supp H)$. 
The Neumann boundary values of $u_h'$ on $\partial\Omega$ vanish, 
and the trace $u_h'|_{\partial\Omega}\in L^2_\diamond(\Omega)$ 
is the shape derivative $\partial\Lambda_f(\D)h$ 
of the given measurements on the boundary. 
\end{theorem}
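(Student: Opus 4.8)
The plan is to obtain the shape derivative directly from the material derivative $\upunkt_{H}$ of~\cite{BMPS18}, avoiding the transmission problem~\req{HeRu98} altogether at this point. The candidate~\req{Thm:domderiv}, $u_h'=\upunkt_{H}-H\cdot\nabla u$, lies in $L^2(\Omega)$ since $\upunkt_{H}\in H^1(\Omega)$, $H\in L^\infty(\Omega)$, and $\nabla u\in L^2(\Omega)$, the last being a consequence of the corner expansion~\req{BFI92} with $\gamma_{i1}>1/2$ from~\req{gamma}. That $h\mapsto u_h'$ is linear and bounded from $\S$ into $L^2(\Omega)$ is immediate from the linearity of $H\mapsto\upunkt_{H}$ visible in~\req{upunkt-pde}, the bound $\norm{\upunkt_{H}}_{H^1(\Omega)}\leq C\norm{H}_{W^{1,\infty}(\Omega)}$ (test~\req{upunkt-pde} with $w=\upunkt_{H}$ and use~\req{Ahbound}), and the extension estimate~\req{CT}. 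Independence of the chosen extension $H$ will follow a posteriori, once $u_h'$ has been identified with the intrinsic limit $\lim_{t\to0}(u_{th}-u)/t$, which depends on $h$ alone because $\D_{th}$ does.

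The single analytic tool that drives the argument is a transport estimate: for a bi-Lipschitz $\psi\colon\Omega\to\Omega$ close to the identity, $\norm{v\circ\psi-v}_{L^2(\Omega)}\leq C\norm{\psi-\mathrm{id}}_{L^\infty(\Omega)}\norm{\nabla v}_{L^2(\Omega)}$ for all $v\in H^1(\Omega)$, with $C$ uniform near the identity; I would prove it by the fundamental theorem of calculus for smooth $v$, the change-of-variables inequality, and density. Writing $\psi=\PhiG_H^{-1}$, $u_h=\utilde_{H}\circ\psi$, and abbreviating $\norm{H}=\norm{H}_{W^{1,\infty}(\Omega)}$, I decompose
\bdm
   u_h-u-u_h' \,=\, e_{H}\circ\psi \,+\, \bigl(\upunkt_{H}\circ\psi-\upunkt_{H}\bigr)
                \,+\, \bigl((u\circ\psi-u)+H\cdot\nabla u\bigr)\,,
\edm
with $e_{H}=\utilde_{H}-u-\upunkt_{H}$. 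The first term is $O(\norm{H}^2)$ in $L^2(\Omega)$ by Theorem~\ref{Thm:Frechet-bound}; the second is $O(\norm{H}^2)$ by the transport estimate together with $\norm{\upunkt_{H}}_{H^1}=O(\norm{H})$ and $\norm{\psi-\mathrm{id}}_{L^\infty}\leq\norm{H}_{L^\infty}$.

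The \emph{main obstacle} is the third bracket, the Taylor remainder of the transport of $u$ itself, where the weak regularity of $\nabla u$ at the vertices and across $\partial\D$ enters. Representing $u\circ\psi-u$ by the fundamental theorem of calculus (justified for $u\in H^1(\Omega)$ via the transport estimate and density) and using $\psi(x)-x=-H(\psi(x))$, the bracket splits into $-\nabla u\cdot(H\circ\psi-H)$ and an averaged increment of $\nabla u$ along the segments $x\mapsto x+\theta(\psi(x)-x)$. The former is $O(\norm{H}^2)$ in $L^2(\Omega)$, the crucial point being that the Lipschitz constant $\norm{DH}_{L^\infty}$ is itself $O(\norm{H})$, so that $\norm{H\circ\psi-H}_{L^\infty}\leq\norm{DH}_{L^\infty}\norm{H}_{L^\infty}=O(\norm{H}^2)$. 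The latter is bounded by $\norm{H}_{L^\infty}$ times the $L^2$-modulus of continuity of $\nabla u$ under a near-identity shift of size $\norm{H}_{L^\infty}$; since $\nabla u$ has only a bounded jump across the Lipschitz curve $\partial\D$ and mild vertex singularities $r^{\gamma_{i1}-1}$ with $\gamma_{i1}>1/2$, this modulus is $O(\delta^{1/2})$ for a shift of size $\delta$, giving $O(\norm{H}^{3/2})$. Hence $\norm{u_h-u-u_h'}_{L^2(\Omega)}=O(\norm{H}^{3/2})=o(\norm{h}_{\S})$ by~\req{CT}, which proves the Fréchet shape derivative and, specialising to $th$, both the directional limit and the independence of the extension.

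The remaining qualitative assertions are then read off. On $\Omega\setminus\operatorname{supp}H$ one has $H\equiv0$, so $u_h'=\upunkt_{H}$ there; this yields $u_h'\in H^1(\Omega\setminus\operatorname{supp}H)$ and a well-defined trace on $\partial\Omega$ lying in $L^2_\diamond(\partial\Omega)$, which equals $\partial\Lambda_f(\D)h$ by~\req{BMPS18}. Harmonicity of $u_h'$ in $\Omega\setminus\partial\D$ follows from the limit characterisation: on any compact subset of $\D$ or of $\Omega\setminus\overline\D$ the quotients $(u_{th}-u)/t$ are harmonic for $t$ small and converge in $L^2$, so the limit is harmonic by Weyl's lemma. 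Finally, the homogeneous Neumann condition on $\partial\Omega$ is obtained by testing~\req{upunkt-pde} with $w\in H^1(\Omega)$ such that $\operatorname{supp}w\cap\overline{\D_2}=\emptyset$: there ${\cal A}_{H}=0$, so the right-hand side vanishes and $\int_\Omega\nabla\upunkt_{H}\cdot\nabla w\,\dx=0$, which is precisely the weak form of $\Delta u_h'=0$ near $\partial\Omega$ with vanishing normal derivative on $\partial\Omega$.
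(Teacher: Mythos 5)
Your proposal is correct, and its second half coincides with the paper's proof almost verbatim: harmonicity of $u_h'$ via Weyl's lemma applied to the quotients $(u_{th}-u)/t$ on subdomains compactly contained in $\D$ or $\Omega\setminus\overline\D$, the identity $u_h'=\upunkt_{H}$ on $\Omega\setminus\supp H$ (whence the $H^1$ regularity there and the trace identification through \req{BMPS18}), and the homogeneous Neumann condition by testing \req{upunkt-pde} with functions supported away from $\D\cup\supp H$. Where you genuinely diverge is the differentiability step: the paper disposes of it in one sentence by invoking \cite[Lemme~5.3.3]{HePi05}, which converts the Fr\'echet property of the material derivative (Theorem~\ref{Thm:Frechet-bound}) together with linearity and boundedness of the extension $h\mapsto H$ into the Fr\'echet shape derivative in ${\cal L}(\S,L^2(\Omega))$, whereas you reprove this implication by hand through the decomposition $u_h-u-u_h'=e_{H}\circ\psi+(\upunkt_{H}\circ\psi-\upunkt_{H})+(u\circ\psi-u+H\cdot\nabla u)$ with $\psi=\PhiG_H^{-1}$, a transport estimate, and --- for the critical third bracket --- the corner structure of $\nabla u$ (bounded jumps across the edges, $r^{\gamma_{i1}-1}$ singularities with $\gamma_{i1}>1/2$ from \req{BFI92} and \req{gamma}), which yields an $L^2$-modulus of continuity $O(\delta^{1/2})$ and hence a remainder $O(\norm{h}_{\S}^{3/2})$. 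This is strictly more quantitative than what the theorem requires; it is worth noting that the corner regularity is not actually needed for Fr\'echet differentiability, since for an arbitrary $u\in H^1(\Omega)$ a three-epsilon density argument (split $u$ into a smooth part plus a part with small gradient and apply your transport estimate to the latter) already gives $\norm{u\circ\psi-u+H\cdot\nabla u}_{L^2(\Omega)}=o(\norm{H}_{W^{1,\infty}(\Omega)})$ uniformly --- in essence the content of the cited lemma. So your route buys self-containedness and an explicit remainder rate, at the cost of length and of two details you should make explicit: (i) the segments $[x,\psi_\theta(x)]$ used in the fundamental-theorem representation stay inside $\Omega$ because $\psi-\mathrm{id}$ is supported in a small neighborhood of $\overline\D_2$, which is compactly contained in $\Omega$ (alternatively, extend $u$ to $H^1(\R^2)$); and (ii) in the modulus estimate the exceptional strip around $\partial\D$ should be taken of width $2\delta$, so that outside it a segment of length at most $\delta$ cannot cross an edge --- both routine repairs that do not affect the conclusion.
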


\begin{proof}
The statement on the Fr\'echet derivative of $u=u(\D)$ and its
representation~\req{Thm:domderiv} follows with the aid of, e.g.,
\cite[Lemme~5.3.3]{HePi05} from the fact that the material derivative
is a Fr\'echet derivative 
and the extension operator $h\mapsto H$ is linear and bounded.

Let $h\in\S$ be fixed, and
consider any domain $\Omega'$ with $\overline\Omega'\subset\D$, 
and let $w\in C_0^\infty(\Omega')$. 
Then $\Omega'\subset\D_{th}$ for every $0<t<t'$ with $t'$ sufficiently small, 
and hence, both $u_{th}$ and $u$ are harmonic in $\Omega'$ for $0<t<t'$. 
Accordingly,
\bdm
   \int_{\Omega'} \frac{u_{th}-u}{t}\,\Delta w\dx \,=\, 0 \qquad 
   \text{for $0<t<t'$}\,,
\edm
and in the limit $t\to 0$ this yields
\bdm
   \int_{\Omega'} u'_h\Delta w\dx \,=\, 0 \qquad 
   \text{for every $w\in C_0^\infty(\Omega')$}\,.
\edm
It thus follows from Weil's lemma that $u_h'$ is 
harmonic in $\Omega'$, and hence in $\D$. The same argument applies for any 
domain $\Omega'$ with $\overline\Omega'\subset\Omega\setminus\overline\D$,
showing that $u_h'$ is also harmonic in $\Omega\setminus\overline\D$.

By virtue of \req{Thm:domderiv} $u_h'$ coincides with $\upunkt_{H}$ on
$\Omega\setminus\supp H$. 
Accordingly, $\upunkt_{H}$ is a harmonic function
in $\Omega\setminus\supp H$, $u_h'$ belongs to 
$H^1(\Omega\setminus\supp H)$, and there holds 
$u_h'|_{\partial\Omega}=\upunkt_{H}|_{\partial\Omega}
=\partial\Lambda_f(\D)h$, and
\bdm
   \frac{\partial}{\partial\nu} u_h' 
   \,=\, \frac{\partial}{\partial\nu} \upunkt_{H}
   \,\in\, H^{-1/2}(\partial\Omega)\,.
\edm
This Neumann derivative can be determined from the variational 
definition~\req{upunkt-pde} of $\upunkt_{H}$: Choosing an arbitrary test 
function $w\in C^\infty(\R^2)$, which vanishes on $\D\cup\supp H$, 
it follows from \req{upunkt-pde} that
\bdm
   0 \,=\, \int_{\Omega\setminus\overline\D} \nabla\upunkt_{H}\cdot\nabla w\dx 
     \,=\, \int_{\partial\Omega} w\,\frac{\partial}{\partial\nu}\upunkt_{H} \ds\,,
\edm
where the right-hand side is to be understood as the duality pairing of 
elements from $H^{\pm 1/2}(\partial\Omega)$.
Since the traces on $\partial\Omega$ of all these admissible test functions
are dense in $L^2(\partial\Omega)$, the Neumann derivative on 
$\partial\Omega$ of $\upunkt_{H}$, and hence of $u_h'$, must vanish.  
\end{proof}

\section{The transmission problem for polygonal inclusions}
\label{Sec:HeRu98}
Given a simply connected polygon $\D$ 
as specified in Section~\ref{Sec:forward}
we now introduce the four-dimensional spaces $S_i$, $i=1,\dots,n$,
spanned by the singular functions
\be{uiprime}
   w_i(x) \,=\, \begin{cases}
                   \chi_i(x)\ytilde_i(\theta)r^{\gamma_{i1}-1}\,, & 
                      x\in\B_{r_i}(x_i)\,,\\
                   0\,, & x\in\Omega\setminus\overline{\B_{r_i}(x_i)}\,,
                \end{cases}
\ee
defined in terms of the local coordinate system~\req{localcoordinates}, with
\be{yitilde}
   \ytilde_i(\theta) \,=\, 
   \begin{cases}
      A_i'{}^-\cos(\gamma_{ij}-1)\theta \,+\, B_i'{}^-\sin(\gamma_{ij}-1)\theta\,,
      & \quad \phantom{\alpha_i}0<\theta<\alpha_i\,,\\[1ex]
      A_i'{}^+\cos(\gamma_{ij}-1)\theta \,+\, B_i'{}^+\sin(\gamma_{ij}-1)\theta\,,
      & \quad \phantom{0}\alpha_i<\theta<2\pi\,.
   \end{cases}
\ee
Then for every element
\be{V}
   v\,\in\,
   V \,:=\, H^1(\Omega\setminus\partial\D)
            \,\oplus\, S_1\oplus\,\dots\,\oplus\, S_n
\ee            
the restrictions $v^-=v|_\D$ and $v^+=v|_{\Omega\setminus\overline\D}$ 
have well defined traces on every edge $\Gamma_i$
of $\partial\D$. Such a trace is the sum of a function in 
$H^{1/2}(\Gamma_i)$ and a function in $C^\infty(\Gamma_i)$. 
Further, if $v^-$ and $v^+$ are harmonic functions,
say, then their normal derivatives on $\Gamma_i$ exist in
$H^{-1/2}(\Gamma_i)\oplus C^\infty(\Gamma_i)$. 
Corresponding properties are valid on $\partial\Omega$.

\begin{theorem}
\label{Thm:HeRu98-1}
Let $\D$ be as above and $h\in\S$.
Then the transmission problem 
\be{HeRu98-2}
\begin{array}{c}
   \Delta w \,=\, 0 \quad \text{in $\Omega\setminus\partial\D$}\,, \qquad
   \dfrac{\partial}{\partial\nu}w \,=\, 0 \quad \text{on $\partial\Omega$}\,,
   \qquad 
   {\displaystyle \int_{\partial\Omega} w\ds \,=\, 0\,,}
   \\[3ex]
   \bigl[w\bigr]_{\Gamma_i} 
   \,=\, (1-k)(h\cdot\nu)\dfrac{\partial}{\partial\nu}u^{-}\,, \quad
   \bigl[D_\nu w\bigr]_{\Gamma_i} 
   \,=\, (1-k)\,\dfrac{\partial}{\partial\tau}
                \Bigl((h\cdot\nu)\dfrac{\partial}{\partial\tau}u\Bigr)\,,
\end{array}
\ee
where $i$ runs from $1$ to $n$, has a unique solution $w\in V$, where $V$
is defined in \req{V}.
\end{theorem}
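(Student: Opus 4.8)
The plan is to split the desired solution as $w=w_{\mathrm{s}}+w_{\mathrm{r}}$, with a singular part $w_{\mathrm{s}}\in S_1\oplus\dots\oplus S_n$ that absorbs the corner singularities of the transmission data and a remainder $w_{\mathrm{r}}\in H^1(\Omega\setminus\partial\D)$ that solves a residual problem with data in the natural trace spaces $H^{\pm 1/2}$. First I would read off the leading singular behaviour of the jump data in \req{HeRu98-2} near each vertex $x_i$: inserting the expansion \req{BFI92} together with \req{normalderiv} and \req{tanderiv}, and writing $h\cdot\nu=(h\cdot\nu)(x_i)+O(r)$ (the remainder being Lipschitz since $h\in\S$), one finds that $[w]_{\Gamma_i}$ has leading term proportional to $r^{\gamma_{i1}-1}$ and $[D_\nu w]_{\Gamma_i}$ leading term proportional to $r^{\gamma_{i1}-2}$, with the same exponents arising from both edges meeting at $x_i$. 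By \req{gamma} the exponent $\gamma_{i1}-1$ is the only one of the data lying in the critical range $(-1/2,0)$, which explains why a single four-dimensional space $S_i$ per vertex should suffice.

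Next I would determine the four coefficients $A_i'{}^\pm,B_i'{}^\pm$ in \req{yitilde} so that the jumps of the singular function $w_i$ of \req{uiprime} reproduce these four leading coefficients --- the function jump and the conormal jump on each of $\Gamma_i$ and $\Gamma_{i+1}$. This is a $4\times 4$ linear system whose matrix is $Y(\gamma_{i1}-1,\alpha_i)$ from \req{linalg}. Since \req{Lem:Y} involves $\gamma$ only through $|\sin|$, this matrix is singular exactly when $|\gamma_{i1}-1|=1-\gamma_{i1}$ is a nonnegative root of \req{Lem:Y}; but $1-\gamma_{i1}\in(0,1/2)$ lies strictly below the first positive root $\gamma_{i1}>1/2$ and so is not a root, whence $Y(\gamma_{i1}-1,\alpha_i)$ is nonsingular by Lemma~\ref{Lem:Y} and the coefficients are uniquely fixed. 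With $w_{\mathrm{s}}=\sum_i w_i$ the cut-offs $\chi_i$ keep $w_{\mathrm{s}}$ supported away from $\partial\Omega$ and render $\Delta w_{\mathrm{s}}$ an $L^2$-function supported in annuli around the vertices, where $r^{\gamma_{i1}-1}$ is smooth, while the leading corner singularities of both jump conditions are now cancelled.

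It then remains to solve for $w_{\mathrm{r}}=w-w_{\mathrm{s}}$ the problem $\Delta w_{\mathrm{r}}=-\Delta w_{\mathrm{s}}$ in $\Omega\setminus\partial\D$ with homogeneous Neumann data, vanishing mean on $\partial\Omega$, and residual jumps across the $\Gamma_i$. The crux is to check that after the cancellation these residual jumps lie in $H^{1/2}(\Gamma_i)$ and $H^{-1/2}(\Gamma_i)$: the surviving terms carry exponents $\geq\gamma_{i1}>0$ for the function jump and $>-1$ (namely $\gamma_{i1}-1$ and $\gamma_{i2}-2$) for the conormal jump, and multiplication by the $W^{1,\infty}$-factor $h\cdot\nu$ preserves these spaces. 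Lifting the $H^{1/2}$-valued function jump by a function in $H^1(\Omega\setminus\partial\D)$ reduces the task to a genuine transmission problem for a function in $H^1_\diamond(\Omega)$, whose weak form carries the bilinear form $\int_\Omega\sigma\,\nabla w_{\mathrm{r}}\cdot\nabla\varphi\dx$; this form is coercive on $H^1_\diamond(\Omega)$ by Poincar\'e's inequality since $\sigma\geq\min\{1,k\}>0$, and its right-hand side --- the $L^2$ source together with the $H^{-1/2}$ conormal jump paired against the $H^{1/2}$ trace of $\varphi$ --- is a bounded functional, so Lax--Milgram produces a unique $w_{\mathrm{r}}$ and the mean-value normalization fixes the additive constant. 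Uniqueness for the full problem follows by the same singularity analysis: for a homogeneous solution $w=w_{\mathrm{r}}+s\in V$ the remainder $w_{\mathrm{r}}$, being harmonic and $H^1$ near each vertex, has corner exponents at least $\gamma_{i1}>0$ in its traces and $\gamma_{i1}-1$ in its conormal derivatives, whereas $s$ supplies the strictly more singular exponents $\gamma_{i1}-1$ and $\gamma_{i1}-2$; hence the vanishing of all jumps forces $Y(\gamma_{i1}-1,\alpha_i)$ applied to the coefficients of $s$ to vanish, so that $s=0$ and then $w_{\mathrm{r}}=0$ by coercivity.

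I expect the regularity bookkeeping of the third paragraph to be the main obstacle: verifying that the subtraction of the single leading singular term at each corner leaves the residual jump data exactly in $H^{1/2}$ and $H^{-1/2}$ demands careful termwise control of the products of the Lipschitz field $h\cdot\nu$ with the differentiated corner expansion of $u$, and this is the technical heart (the Kondratiev--Grisvard step) on which the applicability of the standard variational theory rests.
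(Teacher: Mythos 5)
Your construction follows the paper's proof almost step for step: the same singular functions $w_i$ with coefficients determined by the $4\times 4$ system with matrix $Y(\gamma_{i1}-1,\alpha_i)$, the same invertibility argument (evenness of \req{Lem:Y} in $\gamma$ together with $1-\gamma_{i1}\in(0,1/2)$ lying strictly below the smallest positive root $\gamma_{i1}$), then a residual $H^1$ transmission problem; your uniqueness argument is a fleshed-out version of the paper's one-line remark. There is, however, one genuine gap, and it is exactly where the paper spends roughly half of its proof. The residual problem \req{u0prime} is of pure Neumann type, so it is solvable in $H^1(\Omega\setminus\partial\D)$ \emph{if and only if} the data annihilate constants, i.e., if and only if the integrability condition \req{Integrabilitaetsbedingung}, $\sum_{i=1}^n\int_{\Omega\setminus\partial\D} F_i\dx=\int_{\partial\D}\psi\ds$, holds (cf.\ Costabel and Stephan, as cited in the paper). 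Your Lax--Milgram argument on $H^1_\diamond(\Omega)$ does not see this constraint: it produces a unique $w_{\mathrm{r}}$ satisfying the variational identity against \emph{zero-mean} test functions only, and testing the full weak form with $w\equiv 1$ shows that this $w_{\mathrm{r}}$ in fact carries the constant Neumann datum $\bigl(\sum_i\int F_i\dx-\int_{\partial\D}\psi\ds\bigr)/|\partial\Omega|$ on $\partial\Omega$. Unless the compatibility condition is verified, you have solved a different boundary value problem; your remark that ``the mean-value normalization fixes the additive constant'' addresses uniqueness, not this solvability constraint.

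Verifying \req{Integrabilitaetsbedingung} is also not routine bookkeeping, and it --- rather than the $H^{\pm 1/2}$ membership of the residual jumps, which the paper dispatches quickly in \req{u0prime-sources} --- is the true technical heart you flagged in your last paragraph but misidentified. The difficulty is that the two individual terms of $\psi$ in \req{psiprime} fail to be integrable in a classical sense, so the condition must be checked by excising disks $\B_\delta(x_i)$ around the vertices, applying Green's formula in $\D\setminus\overline\B_\delta$ and in $\Omega\setminus\overline{\D\cup\B_\delta}$, and passing to the limit $\delta\to 0$: the dangerous terms of order $\delta^{\gamma_{i1}-1}$ then cancel precisely because of the identity \req{integral-identity}, which is extracted from the second and fourth equations of the system \req{linalg-inhomogen0}. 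In other words, the compatibility of the residual data is a nontrivial consequence of the specific right-hand side chosen for the singular coefficients, not an automatic feature of the decomposition --- and your proof as written never touches this point.
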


\begin{proof}
Uniqueness is obvious: If 
$w_1,w_2\in V$ are two solutions of \req{HeRu98-2},
then $v=w_1-w_2$ solves the homogeneous Neumann problem~\req{vg-pde} with 
$g=0$. This proves that $v=0$.

To establish existence, let
\be{ai}
   h_i^- \,=\, \lim_{x\to x_i,\,x\in\Gamma_i} (h\cdot\nu)(x)\,, \qquad
   h_i^+ \,=\, \lim_{x\to x_i,\,x\in\Gamma_{i+1}} (h\cdot\nu)(x)\,,
\ee
for every $i=1,\dots,n$; take note that $h_i^-\neq h_i^+$ in general, and that
\be{h-local}
   (h\cdot\nu)(x) 
   \,=\, \begin{cases}
            h_i^-\,+\, O(|x-x_i|)\,, & x\in\Gamma_i\,,\\
            h_i^+\,+\, O(|x-x_i|)\,, & x\in\Gamma_{i+1}\,,
         \end{cases}
\ee
for $x$ near $x_i$.
Furthermore, for $i=1,\dots,n$ let $A_i'{}^\pm$ and $B_i'{}^\pm$ 
be the entries of the solution of the linear system
\be{linalg-inhomogen0}
   Y(\gamma_{i1}-1,\alpha_i) 
   \begin{cmatrix}
      A_i'{}^- \\ B_i'{}^- \\ A_i'{}^+ \\ B_i'{}^+
   \end{cmatrix}
   \,=\, (1-k)\gamma_{i1}
   \begin{cmatrix}
      h_i^+B_{i1}^- \\
      h_i^+A_{i1}^- \\
      \phantom{-}h_i^-(A_{i1}^-s_{i1}-B_{i1}^-c_{i1}) \\
      -h_i^-(A_{i1}^-c_{i1}+B_{i1}^-s_{i1})
   \end{cmatrix}
\ee
with $Y\in\R^{4\times 4}$ defined in \req{linalg}.
Since $\gamma'=\gamma_{i1}-1\in (-1/2,0)$ according to \req{gamma}, 
$\gamma'$ cannot be a solution of \req{gamma-cond}, and hence the
inhomogeneous linear system~\req{linalg-inhomogen0} has a unique solution 
by virtue of Lemma~\ref{Lem:Y}. 
The way this system is set up, the function 
$w_i\in S_i$ of \req{uiprime}, \req{yitilde} 
with these particular coefficients $A_i'{}^\pm$ and $B_i'{}^\pm$
is such that $\bigl[w_i\bigr]_{\partial\D}$ and 
$\bigl[D_\nu w_i\bigr]_{\partial\D}$
coincide near $x=x_i$ with the leading order terms of the prescribed
transmission data in \req{HeRu98-2}, compare \req{normalderiv} and
\req{tanderiv}. In fact, from \req{h-local} and \req{gamma} follows that
\begin{subequations}
\label{eq:u0prime-sources}
\begin{align}
\label{eq:phiprime}
   \varphi &\,:=\, (1-k)\,(h\cdot\nu)\frac{\partial}{\partial\nu} u^-
                  \,-\, \sum_{i=1}^n \bigl[w_i\bigr]_{\partial\D}
   \,\in\, H^{1/2}(\partial\D)
\intertext{and}
\label{eq:psiprime}
   \psi &\,:=\, (1-k)\,\dfrac{\partial}{\partial\tau}
                      \Bigl((h\cdot\nu)\dfrac{\partial}{\partial\tau} u\Bigr)
               \,-\, \sum_{i=1}^n \bigl[D_\nu w_i\bigr]_{\partial\D} 
   \,\in\, H^{-1/2}(\partial\D)\,.
\intertext{Further, since $\ytilde_ir^{\gamma_{i1}-1}$ is harmonic in
$\supp\chi_i\setminus\partial\D$,}
\label{eq:Fprime}
   F_i &\,=\, \sigma\Delta w_i
         \,=\, \sigma\beta_{i1}
                \bigl(2\nabla\chi_i\cdot\nabla(\ytilde_ir^{\gamma_{i1}-1})
                      \,+\, \ytilde_ir^{\gamma_{i1}-1}\Delta\chi_i\bigr)\,, 
\end{align}
\end{subequations}
defined in $\Omega\setminus\partial\D$, is smooth in both subdomains
and supported in a small annulus around $x_i$. In particular, 
$F_i\in L^2(\Omega)$, and hence the transmission problem
\be{u0prime}
\begin{array}{c}
   -\nabla\cdot(\sigma\nabla w_0) \,=\,
   {\displaystyle \sum_{i=1}^n F_i} \qquad 
   \text{in $\Omega\setminus\partial\D$}\,, \\[3ex]
   \dfrac{\partial}{\partial\nu} w_0 = 0 \quad \text{on $\partial\Omega$}\,, 
   \qquad
   {\displaystyle \int_{\partial\Omega} w_0\ds \,=\, 0}\,, \\[4ex]
   \bigl[w_0\bigr]_{\partial\D} 
   \,=\, \varphi\,,
   \qquad
   \bigl[D_\nu w_0\bigr]_{\partial\D}
   \,=\, \psi\,,
\end{array}
\ee 
has a unique solution $w_0\in H^1(\Omega\setminus\partial\D)$, 
if and only if the integrability condition
\be{Integrabilitaetsbedingung}
   \sum_{i=1}^n \int_{\Omega\setminus\partial\D} F_i\dx 
   \,-\, \int_{\partial\D} \psi\ds \,=\, 0
\ee
is satisfied: compare Costabel and Stephan~\cite{CoSt85}. 

In order to check \req{Integrabilitaetsbedingung} one has to be careful, 
because although the two integrals in \req{Integrabilitaetsbedingung} are 
well-defined, the two individual terms of $\psi$ in \req{psiprime}
fail to be integrable in a classical sense. 
To take this into account we choose $\delta>0$ so small that 
$\chi_i$ is equal to one in $\B_\delta(x_i)$ for every $i=1,\dots,n$,
in which case
\bdm
   \frac{\partial}{\partial\nu} w_i(x) 
   \,=\, \beta_{i1}(\gamma_{i1}-1)\,\ytilde_i(\theta)\,\delta^{\gamma_{i1}-2} \qquad
   \text{on $\partial\B_\delta(x_i)$}\,,
\edm
where $\nu$ denotes the exterior normal vector on $\partial\B_\delta(x_i)$.
Let $\B_\delta$ be the union of these disks $\B_\delta(x_i)$, $i=1,\dots,n$.
Then Green's formula, applied in $\D\setminus\overline\B_\delta$ and in
$\Omega\setminus\overline{\D\cup\B_\delta}$, yields
\bdmal
   &\int_{\Omega\setminus\partial\D} F_i\dx
    \,=\, \int_{\Omega\setminus\overline{\B_\delta\cup\partial\D}} F_i\dx
    \,=\, \int_{\Omega\setminus\overline{\D\cup\B_\delta}} \Delta w_i\dx
          \,+\, \int_{\D\setminus\overline\B_\delta} k\,\Delta w_i\dx
          \\[1ex]
   &\qquad
    \,=\, \int_{\partial\Omega} \frac{\partial}{\partial\nu} w_i\ds
          \,-\, \int_{\partial\D\setminus\overline\B_\delta} 
                   \bigl[D_\nu w_i\bigr]_{\partial\D}\ds
          \,-\, \int_{\partial\B_\delta} 
                   \sigma\frac{\partial}{\partial\nu} w_i\ds\\[1ex]
   &\qquad
    \,=\, -\int_{\partial\D\setminus\overline\B_\delta} 
             \bigl[D_\nu w_i\bigr]_{\partial\D}\ds
          \,-\, \beta_{i1}(\gamma_{i1}-1)\,\delta^{\gamma_{i1}-1}
                \Bigl(
                   \int_0^{\alpha_i} k\,\ytilde_i(\theta)\dtheta
                   \,+\, \int_{\alpha_i}^{2\pi} \ytilde_i(\theta)\dtheta\Bigr)
\edmal
for every $i=1,\dots,n$. From \req{psiprime} we further have
\bdmal
   \int_{\partial\D\setminus\overline\B_\delta} \psi\ds 
   &\,=\, (1-k)\sum_{i=1}^n 
               \int_{\Gamma_i\setminus\overline\B_\delta} 
                  \frac{\partial}{\partial\tau}
                  \Bigl((h\cdot\nu)\frac{\partial}{\partial\tau} u\Bigr)\ds
          \,-\, \sum_{i=1}^n\int_{\partial\D\setminus\overline\B_\delta} 
                              \bigl[D_\nu w_i\bigr]_{\partial\D}\ds\\[1ex]
   &\,=\, (1-k)\sum_{i=1}^n 
                  \Bigl[(h\cdot\nu)\frac{\partial}{\partial\tau} u\Bigr]_i
          \!-\, \sum_{i=1}^n \int_{\partial\D\setminus\overline\B_\delta} 
                               \bigl[D_\nu w_i\bigr]_{\partial\D}\ds\,,
\edmal
where
\be{eckig-i}
   \Bigl[(h\cdot\nu)\frac{\partial}{\partial\tau} u\Bigr]_i
   \,=\, -\beta_{i1}\gamma_{i1}
          \bigl(h_i^-(A_{i1}^-c_{i1}+B_{i1}^-s_{i1}) \,+\, h_i^+A_{i1}^-\bigr)
          \delta^{\gamma_{i1}-1} \,+\, o(1)
\ee
as $\delta\to 0$ is a short-hand notation for the difference of the values of 
$(h\cdot\nu)\frac{\partial}{\partial\tau}u$ at 
$\Gamma_i\cap\partial\B_\delta(x_i)$ and $\Gamma_{i+1}\cap\partial\B_\delta(x_i)$, 
which amounts to the right-hand side of
\req{eckig-i} according to \req{tanderiv} and \req{h-local}.
Since \req{yitilde} and the second and fourth equations
of \req{linalg-inhomogen0} give
\be{integral-identity}
\begin{aligned}
   &(\gamma_{i1}-1)\Bigl(\int_0^{\alpha_i} k\,\ytilde_i(\theta)\dtheta
                        \,+\, \int_{\alpha_i}^{2\pi} \ytilde_i(\theta)\dtheta
                  \Bigr)\\[1ex]
   &\qquad \qquad
    \,=\, (1-k)\gamma_{i1}
         \bigl(h_i^+A_{i1}^- \,+\,h_i^-(A_{i1}^-c_{i1}+B_{i1}^-s_{i1})\bigr)\,,
\end{aligned}
\ee
it follows that
\bdmal
   &\sum_{i=1}^n \int_{\Omega\setminus\partial\D} F_i\dx 
    \,-\, \int_{\partial\D\setminus\overline\B_\delta} \psi\ds \\[1ex]
   &\ 
    \,=\, (k-1)\sum_{i=1}^n 
                  \Bigl[(h\cdot\nu)\frac{\partial}{\partial\tau} u\Bigr]_i
          \,-\, \sum_{i=1}^n \beta_{i1} (\gamma_{i1}-1)\,\delta^{\gamma_{i1}-1}
                \Bigl(
                   \int_0^{\alpha_i} k\,\ytilde_i(\theta)\dtheta
                   \,+\, \int_{\alpha_i}^{2\pi} \ytilde_i(\theta)\dtheta\Bigr)
\edmal
converges to zero as $\delta\to 0$.

We thus have shown that \req{u0prime} has a solution 
$w_0\in H^1(\Omega\setminus\partial\D)$, and hence,
\be{uprime-split}
   w \,=\, w_0 \,+\, \sum_{i=1}^n w_i \,\in\, V
\ee
solves \req{HeRu98-2}. 
\end{proof}

\begin{remark}
\rm
Since $w_i$ belongs to 
$H^\gamma(\Omega\setminus\partial\D)$ for every
\bdm
   \gamma \,<\, \min\{\gamma_{i1}\,:\, i=1,\dots,n\}\,,
\edm
cf., e.g., \cite[Theorem~1.2.18]{Gris92}, this is also true for $w$. 
\fin
\end{remark}

\section{The shape derivative of \boldmath $u$ solves 
the transmission problem}
\label{Sec:domderiv}
In Section~\ref{Sec:estimates} we have demonstrated that $u=u(\D)$ admits a 
well-defined shape derivative $u_h'=\partial u(\D)h$ in the direction of
any given vector field $h\in\S$. 
In the sequel it will be shown that $u_h'$ coincides with the solution $w$ 
of the transmission problem~\req{HeRu98-2}.

\begin{theorem}
\label{Thm:domderivPDE}
Let $\D$ be a polygon with simply connected closure $\overline\D\subset\Omega$.
Then the shape derivative of $u=u(\D)$ in direction
$h\in \S$ is the solution 
of the transmission problem~\req{HeRu98-2}.
\end{theorem}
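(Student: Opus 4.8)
The plan is to let $w\in V$ denote the unique solution of the transmission problem~\req{HeRu98-2} provided by Theorem~\ref{Thm:HeRu98-1}, and to identify it with the shape derivative $u_h'=\partial u(\D)h$ of Theorem~\ref{Thm:domderiv}. I would pin down $w$ successively: first its trace on $\partial\Omega$, then its values in the exterior region $\Omega\setminus\overline\D$, and finally its values in $\D$. Throughout I may use that, by Theorem~\ref{Thm:domderiv}, $u_h'$ is harmonic in $\Omega\setminus\partial\D$, has vanishing Neumann data on $\partial\Omega$, satisfies $\int_{\partial\Omega}u_h'\ds=0$, and obeys $u_h'=\upunkt_{H}-H\cdot\nabla u$ as in~\req{Thm:domderiv}, with $u_h'|_{\partial\Omega}=\partial\Lambda_f(\D)h$.

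The first and decisive step is to show that $w|_{\partial\Omega}$ satisfies the variational identity~\req{BFV17}. To this end I would pair $w$ with an arbitrary forward solution $v_g$ of~\req{vg-pde} and apply Green's second identity for the operator $\nabla\cdot(\sigma\nabla\,\cdot\,)$ on $\D\setminus\overline{\B_\delta}$ and on $\Omega\setminus\overline{\D\cup\B_\delta}$, the disks $\B_\delta(x_i)$ being excised to handle the corner singularities of $w$ exactly as in the proof of Theorem~\ref{Thm:HeRu98-1}. The contribution on $\partial\Omega$ collapses, via $\partial_\nu w=0$ and $\partial_\nu v_g=g$, to $\scalp{w|_{\partial\Omega},g}$. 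Inserting the continuity relations~\req{Bnu} for $v_g$ and the jump relations of~\req{HeRu98-2} for $w$, the contribution on $\partial\D$ reduces to $\int_{\partial\D}\bigl(v_g\,[D_\nu w]_{\Gamma_i}-k\,\partial_\nu v_g^-\,[w]_{\Gamma_i}\bigr)\ds$, and after one tangential integration by parts on each edge this becomes $-(1-k)\int_{\partial\D}(h\cdot\nu)\,\nabla u^-\cdot(M\nabla v_g^-)\ds$, i.e. the right-hand side of~\req{BFV17}. I expect this to be the main obstacle: the vertex terms produced by the tangential integration by parts and the circle integrals over $\partial\B_\delta$ are each individually divergent of order $\delta^{\gamma_{i1}-1}$ as $\delta\to0$, and the whole point is to show that they cancel and leave a finite limit. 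This cancellation is precisely governed by the identity~\req{integral-identity} and mirrors the delicate $\delta\to0$ analysis already carried out for the integrability condition~\req{Integrabilitaetsbedingung}.

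Granting this, $w|_{\partial\Omega}$ and $\partial\Lambda_f(\D)h$ satisfy the same identity~\req{BFV17} for every $g\in L^2_\diamond(\partial\Omega)$, and both lie in $L^2_\diamond(\partial\Omega)$ (use $\int_{\partial\Omega}w\ds=0$); testing against all such $g$ therefore forces $w|_{\partial\Omega}=\partial\Lambda_f(\D)h=u_h'|_{\partial\Omega}$. Thus $w$ and $u_h'$ share their Dirichlet trace on $\partial\Omega$ and both have vanishing Neumann data there. Since each is harmonic in the connected set $\Omega\setminus\overline\D$, unique continuation of harmonic functions from the common Cauchy data on $\partial\Omega$ yields $w=u_h'$ throughout $\Omega\setminus\overline\D$; in particular $w^+=(u_h')^+$ on every edge $\Gamma_i$.

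It remains to match the two functions inside $\D$. Here I would compute the Dirichlet jump of $u_h'$ directly from $u_h'=\upunkt_{H}-H\cdot\nabla u$: since $\upunkt_{H}\in H^1(\Omega)$ is continuous across $\partial\D$ and $H|_{\partial\D}=h$, one finds $\bigl[u_h'\bigr]_{\Gamma_i}=-h\cdot\bigl[\nabla u\bigr]_{\Gamma_i}$, while the transmission conditions~\req{Bnu} for $u$ give $\bigl[\nabla u\bigr]_{\Gamma_i}=(k-1)(\partial_\nu u^-)\nu$; hence $\bigl[u_h'\bigr]_{\Gamma_i}=(1-k)(h\cdot\nu)\partial_\nu u^-$, which is exactly the jump prescribed for $w$ in~\req{HeRu98-2}. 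Subtracting this from $w^+=(u_h')^+$ gives $w^-=(u_h')^-$ on $\partial\D$, and as $w^-$ and $(u_h')^-$ are both harmonic in $\D$ with identical boundary values they coincide. Therefore $w=u_h'$ in all of $\Omega$, which is the assertion.
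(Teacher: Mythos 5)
Your proposal is correct and follows essentially the same route as the paper's proof: you establish the trace identity \req{Thm:BFV17} for $w$ via Green's identity with the vertex disks $\B_\delta(x_i)$ excised and the divergent $\delta^{\gamma_{i1}-1}$ terms cancelling through \req{integral-identity}, then invoke unique continuation (the paper cites Holmgren's theorem) from the common Cauchy data on $\partial\Omega$ to match $w$ and $u_h'$ in $\Omega\setminus\overline\D$, and finally use the jump computation $\bigl[u_h'\bigr]_{\partial\D}=(1-k)(h\cdot\nu)\frac{\partial}{\partial\nu}u^-$ derived from $u_h'=\upunkt_{H}-H\cdot\nabla u$ and \req{Bnu} to match the interior traces, exactly as the paper does. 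The only blemish is a sign-bookkeeping slip in your sketch (the right-hand side of \req{BFV17} carries $(1-k)$, not $-(1-k)$), which resolves itself once the orientation conventions in the Green-identity balance are fixed.
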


\begin{proof}
For a given vector field $h\in\S$
let $w$ denote the solution of the transmission problem~\req{HeRu98-2}.
In the first and major step of this proof
we show that the boundary values of $w$ on $\partial\Omega$ coincide with 
$u_h'|_{\partial\Omega}=\partial\Lambda_f(\D)h$, 
compare Theorem~\ref{Thm:domderiv}.

Since $w$ belongs to the space $V$ of \req{V} 
by virtue of Theorem~\ref{Thm:HeRu98-1}, its trace on $\partial\Omega$
is well-defined in $H^{1/2}(\partial\Omega)\subset L^2(\partial\Omega)$,
and it has vanishing mean according to \req{HeRu98-2}.
In view of \req{BFV17} we need to show that
\be{Thm:BFV17}
   \scalp{w,g}_{L^2(\partial\Omega)}
   \,=\, (1-k) \int_{\partial\D} (h\cdot\nu)\,\nabla u^- \cdot(M\nabla v_g^-)\ds
\ee
for every $g\in L^2_\diamond(\partial\Omega)$, where $v_g$ is the solution 
of \req{vg-pde} and $M\in\R^{2\times 2}$ is the symmetric $2\times 2$-matrix 
with eigenvalues $1$ and $k$ and eigenvectors $\tau$ and $\nu$, respectively.

As in the proof of Theorem~\ref{Thm:HeRu98-1} let $\delta>0$ be so small
that $\chi_i(x)=1$ for every $x\in\B_\delta(x_i)$ and every $i=1,\dots,n$,
set $\B_\delta=\cup_i\B_\delta(x_i)$, and denote by $\nu$ the exterior normal
vector on $\partial\B_\delta$. 
Further, let $w^-=w|_\D$ and $w^+=w|_{\Omega\setminus\overline\D}$. 
Since the Neumann derivative of $w$ vanishes
on $\partial\Omega$ by virtue of \req{HeRu98-2}, Green's formula in
$\Omega\setminus\overline{\D\cup\B_\delta}$ and in 
$\D\setminus\overline\B_\delta$ yields
\bdmal
   &\scalp{w,g}_{L^2(\partial\Omega)}
    \,=\, \int_{\partial\Omega} w\,\frac{\partial}{\partial\nu}v_g \ds
          \,-\, \int_{\partial\Omega} v_g\,\frac{\partial}{\partial\nu}w \ds
          \\[1ex]
   &\qquad
    \,=\, -\int_{\partial\D\setminus\B_\delta} v_g\bigl[D_\nu w\bigr]_{\partial\D} \ds
          \,+\, \int_{\partial\D\setminus\B_\delta} 
                   w{}^+\,\frac{\partial}{\partial\nu}v_g^+\ds
          \,-\, \int_{\partial\D\setminus\B_\delta}
                   kw{}^-\,\frac{\partial}{\partial\nu}v_g^-\ds \\[1ex]
   &\qquad\phantom{\,=\,}
          \,-\, \int_{\partial\B_\delta} 
                   \sigma v_g \,\frac{\partial}{\partial\nu}w\ds
          \,+\, \int_{\partial\B_\delta} 
                   \sigma w \,\frac{\partial}{\partial\nu}v_g\ds\,.
\edmal
Using 
\bdm
   w^-|_{\partial\D} \,=\, w^+|_{\partial\D} \,-\, \bigl[w\bigr]_{\partial\D}
\edm
and the transmission conditions~\req{HeRu98-2} as well as the fact that
$\bigl[D_\nu v_g\bigr]_{\partial\D}=0$ we can transform this equation further into
\bdmal
   &\scalp{w,g}_{L^2(\partial\Omega)}
    \,=\, (k-1)\int_{\partial\D\setminus\B_\delta}
                  v_g\,\frac{\partial}{\partial\tau}
                  \Bigl((h\cdot\nu)\dfrac{\partial}{\partial\tau} u\Bigr) 
               \ds\\[1ex]
   &\quad \,
          \,+\, k(1-k)\int_{\partial\D\setminus\B_\delta}
                         (h\cdot\nu)\dfrac{\partial}{\partial\nu} u^-
                         \,\frac{\partial}{\partial\nu}v_g^-\ds 
          \,-\, \int_{\partial\B_\delta}
                   \sigma v_g \,\frac{\partial}{\partial\nu}w\ds
          \,+\, \int_{\partial\B_\delta} 
                   \sigma w \,\frac{\partial}{\partial\nu}v_g\ds\,.
\edmal
The first term on the right-hand side consists of $n$ integrals over
the individual edges of $\D$, and partial integration on each
of these edges gives
\bdmal
   &\scalp{w,g}_{L^2(\partial\Omega)}
    \,=\, (k-1)\sum_{i=1}^n 
                  \Bigl[(h\cdot\nu)\thsp v_g\frac{\partial}{\partial\tau} u
                  \Bigr]_i
          +\, (1-k)\int_{\partial\D\setminus\B_\delta} \!
                        (h\cdot\nu)\frac{\partial}{\partial\tau} u\,
                        \frac{\partial}{\partial\tau} v_g \ds
\\[1ex]
   &\quad \ \ 
          \,+\, k(1-k)\int_{\partial\D\setminus\B_\delta}\!
                         (h\cdot\nu)\dfrac{\partial}{\partial\nu} u^-
                         \,\frac{\partial}{\partial\nu}v_g^-\ds 
          \,-\, \int_{\partial\B_\delta}\!
                   \sigma v_g \,\frac{\partial}{\partial\nu}w\ds
          \,+\, \int_{\partial\B_\delta} \!
                   \sigma w \,\frac{\partial}{\partial\nu}v_g\ds\,,
\edmal
where we utilize the notation introduced in \req{eckig-i}.
Note that this identity can be rewritten as
\be{Frechet-tmp}
\begin{aligned}
   &\scalp{w,g}_{L^2(\partial\Omega)}
    \,=\, (1-k) \int_{\partial\D\setminus\B_\delta} 
                   (h\cdot\nu)\,\nabla u^- \cdot(M\nabla v_g^-)\ds\\[1ex]
   &\ 
          \,+\, (k-1)\sum_{i=1}^n 
                        \Bigl[
                           (h\cdot\nu)\thsp v_g\frac{\partial}{\partial\tau} u
                        \Bigr]_i
          \,-\, \int_{\partial\B_\delta}
                   \sigma v_g \,\frac{\partial}{\partial\nu}w\ds
          \,+\, \int_{\partial\B_\delta} 
                   \sigma w \,\frac{\partial}{\partial\nu}v_g\ds
\end{aligned}
\ee
with $M$ as in \req{BFV17}, 
and the first integral converges to the right-hand side of \req{Thm:BFV17}
as $\delta\to 0$ by virtue of \req{Heps}.
It therefore remains to show that the sum of terms in the second line of
\req{Frechet-tmp} converges to zero as $\delta\to 0$.

We now investigate these three summands individually. For the first one
we utilize \req{BFI92}, \req{tanderiv}, and \req{ai} to obtain
\begin{subequations}
\label{eq:Frechet-tmp-pieces}
\begin{align}
\nonumber
   &\sum_{i=1}^n \Bigl[(h\cdot\nu)\thsp v_g\frac{\partial}{\partial\tau} u
                \Bigr]_i\\[1ex]
\nonumber
   &\ =\, \sum_{i=1}^n 
             \bigl(h_i^-+O(\delta)\bigr)
             \bigl(v_g(x_i)+O(\delta^{\gamma_{i1}})\bigr)
             \bigl(-\beta_{i1}\gamma_{i1}(A_{i1}^-c_{i1}+B_{i1}^-s_{i1})
                   \delta^{\gamma_{i1}-1}+O(\delta^{\gamma_{i2}-1})\bigr)\\
\nonumber
   &\ \phantom{=\,}
          \,-\,
          \sum_{i=1}^n 
             \bigl(h_i^++O(\delta)\bigr)
             \bigl(v_g(x_i)+O(\delta^{\gamma_{i1}})\bigr)
             \bigl(\beta_{i1}\gamma_{i1}A_{i1}^-
                   \delta^{\gamma_{i1}-1}
                   +O(\delta^{\gamma_{i2}-1})\bigr)\\[1ex]
\label{eq:Frechet-tmp-pieces1}
   &\ =\, \displaystyle{
          -\sum_{i=1}^n 
              \beta_{i1}\gamma_{i1} v_g(x_i)
              \bigl(h_i^-(A_{i1}^-c_{i1}+B_{i1}^-s_{i1})
                    \,+\, h_i^+A_{i1}^-\bigr)
              \delta^{\gamma_{i1}-1}} \,+\, o(1)
\end{align}
as $\delta\to 0$, where the estimate of the remainder follows from
\req{gamma}.

Concerning the two integrals over the boundary of $\B_\delta$ in 
\req{Frechet-tmp} we decompose $w$ as in \req{uprime-split} and investigate
the corresponding parts separately. Consider $w_0$ first, i.e., the solution 
of the transmission problem~\req{u0prime}, and let
$w_0^-=w_0|_\D$ and $w_0^+=w_0|_{\Omega\setminus\overline\D}$.
Since the source terms $F_i$ of \req{u0prime} vanish in $\B_\delta$, 
compare~\req{Fprime},
$w_0$ and $v_g$ are both harmonic in $\B_\delta\setminus\partial\D$. 
Since both functions also belong to $H^1(\B_\delta\setminus\partial\D)$
we can apply Green's formula 
in $\B_\delta\cap\D$ and in $\B_\delta\setminus\overline\D$ to obtain
\bdmal
   &\int_{\partial\B_\delta} 
       \sigma w_0 \,\frac{\partial}{\partial\nu}v_g\ds
    \,-\, \int_{\partial\B_\delta} 
            \sigma v_g \,\frac{\partial}{\partial\nu}w_0\ds \\[1ex]
   &\qquad
    \,=\, -\int_{\partial\D\cap\B_\delta} v_g\bigl[D_\nu w_0\bigr]_{\partial\D}\ds
          \,-\, \int_{\partial\D\cap\B_\delta}
                   k\,w_0^-\,\frac{\partial}{\partial\nu}v_g^-\ds
          \,+\, \int_{\partial\D\cap\B_\delta}
                   w_0^+\,\frac{\partial}{\partial\nu}v_g^+\ds\,.
\edmal
Using \req{u0prime} and the fact that 
\bdm
   w_0^+|_{\partial\D} \,=\, w_0^-|_{\partial\D} \,+\, \bigl[w_0\bigr]_{\partial\D}\,,
\edm
we thus arrive at
\bdmal
   &\int_{\partial\B_\delta} 
       \sigma w_0 \,\frac{\partial}{\partial\nu}v_g\ds
    \,-\, \int_{\partial\B_\delta} 
             \sigma v_g \,\frac{\partial}{\partial\nu}w_0\ds \\[1ex]
   &\quad
    \,=\, -\int_{\partial\D\cap\B_\delta} v_g\bigl[D_\nu w_0\bigr]_{\partial\D}\ds
          \,+\, \int_{\partial\D\cap\B_\delta}
                   w_0^-\bigl[D_\nu v_g\bigr]_{\partial\D}\ds
          \,+\, \int_{\partial\D\cap\B_\delta}
                   \bigl[w_0\bigr]_{\partial\D}
                   \frac{\partial}{\partial\nu}v_g^+\ds\\[1ex]
   &\quad
    \,=\, -\int_{\partial\D\cap\B_\delta} v_g\psi\ds
          \,+\, \int_{\partial\D\cap\B_\delta}
                   \varphi\,\frac{\partial}{\partial\nu}v_g^+\ds\,.
\edmal
The two products $v_g\psi$ and $\varphi\frac{\partial}{\partial\nu}v_g^+$
are integrable over $\partial\D$
because $\psi\in H^{-1/2}(\partial\D)$ and $\varphi\in H^{1/2}(\partial\D)$, 
cf.~\req{u0prime-sources}, and from this we conclude that
\be{Frechet-tmp-pieces2}
   \int_{\partial\B_\delta} 
      \sigma w_0 \,\frac{\partial}{\partial\nu}v_g\ds
   \,-\, \int_{\partial\B_\delta} 
            \sigma v_g \,\frac{\partial}{\partial\nu}w_0\ds
   \,=\, o(1)\,, \qquad \delta\to 0\,.
\ee

For every fixed $i=1,\dots,n$ we have 
$\supp w_i\cap\B_\delta \subset \B_\delta(x_i)$, and hence,
\bdm
   \int_{\partial\B_\delta} 
      \sigma w_i \,\frac{\partial}{\partial\nu}v_g\ds
   \,=\, \int_{\partial\B_\delta(x_i)} 
            \sigma w_i \,\frac{\partial}{\partial\nu}v_g\ds\,.
\edm
It further follows from \req{uiprime} and \req{BFI92} that
\bdm
   |w_i(x)| \,=\, O(\delta^{\gamma_{i1}-1}) 
\edm
and
\bdm
   \left|\frac{\partial}{\partial\nu}v_g(x)\right| \,=\, 
   \left|\frac{\partial}{\partial r}v_g(x)\right| \,=\, O(\delta^{\gamma_{i1}-1})
\edm
on $\partial\B_\delta(x_i)$. Inserting these estimates into the 
above integral we conclude that
\be{Frechet-tmp-pieces3}
   \int_{\partial\B_\delta} 
      \sigma w_i \,\frac{\partial}{\partial\nu}v_g\ds
   \,=\, O(\delta^{2\gamma_{i1}-1}) \,=\, o(1)\,, \qquad \delta\to 0\,,
\ee
by virtue of \req{gamma}.

Finally, since $\delta$ is so small that $\chi_i=1$ in $\B_\delta(x_i)$
the Neumann boundary derivative of $w_i$ satisfies
\bdm
   \frac{\partial}{\partial\nu}w_i
   \,=\, \begin{cases}
            \beta_{i1}(\gamma_{i1}-1)\ytilde_i(\theta)\,\delta^{\gamma_{i1}-2}\,, & 
                   \text{on $\partial\B_\delta(x_i)$}\,, \\[1ex]
            0\,, & \text{on $\partial\B_\delta(x_j)$, $j\neq i$}\,,
         \end{cases}
\edm
while
\bdm
   v_g \,=\, v_g(x_i) \,+\, O(\delta^{\gamma_{i1}}) \qquad 
   \text{on $\partial\B_\delta(x_i)$}\,,
\edm
and hence,
\be{Frechet-tmp-pieces4}
\begin{aligned}
   &\int_{\partial\B_\delta} \!\!
       \sigma v_g \,\frac{\partial}{\partial\nu}w_i\ds
    \,=\, \beta_{i1}(\gamma_{i1}-1)v_g(x_i)
          \left(\int_0^{\alpha_i} k\,\ytilde_i(\theta)\dtheta
                + \int_{\alpha_i}^{2\pi} \ytilde_i(\theta)\dtheta\right)
          \delta^{\gamma_{i1}-1}\\[1ex]
   &\qquad \qquad \qquad \qquad \ 
    \,+\, O(\delta^{2\gamma_{i1}-1})\,.
\end{aligned}
\ee
\end{subequations}

Assembling the pieces of \req{Frechet-tmp-pieces} and inserting them into
\req{Frechet-tmp} we thus obtain
\bdmal
   &\scalp{w,g}_{L^2(\partial\Omega)}
    \,=\, (1-k) \int_{\partial\D} 
                   (h\cdot\nu)\,\nabla u^- \cdot(M\nabla v_g^-)\ds\\[1ex]
   &\qquad
          \,-\,(k-1)\sum_{i=1}^n 
               \beta_{i1}\gamma_{i1} v_g(x_i)
               \bigl(h_i^-(A_{i1}^-c_{i1}+B_{i1}^-s_{i1}) \,+\, h_i^+A_{i1}^-\bigr)
               \delta^{\gamma_{i1}-1} \\[1ex]
   &\qquad
          \,-\,\sum_{i=1}^n 
               \beta_{i1}(\gamma_{i1}-1)v_g(x_i)
               \left(\int_0^{\alpha_i} k\,\ytilde_i(\theta)\dtheta
                  \,+\, \int_{\alpha_i}^{2\pi} \ytilde_i(\theta)\dtheta\right)
              \delta^{\gamma_{i1}-1}
              \,+\, o(1)\,.
\edmal
The two integrals over $\ytilde_i$ in the bottom line have already been
evaluated in the course of the proof of Theorem~\ref{Thm:HeRu98-1},
compare~\req{integral-identity}. Their value is such that all terms of 
order $\delta^{\gamma_{i1}-1}$ cancel, proving that
\bdm
   \scalp{w,g}_{L^2(\partial\Omega)}
   \,=\, (1-k) \int_{\partial\D} (h\cdot\nu)\,\nabla u^- \cdot(M\nabla v_g^-)\ds
         \,+\, o(1)\,, \qquad \delta\to 0\,.
\edm
Since the left-hand side of this equation is independent of $\delta$, the
desired identity~\req{Thm:BFV17} must hold true.

As shown in Theorem~\ref{Thm:domderiv} the shape derivative $u_h'$ of $u$
is harmonic in $\D$ and in $\Omega\setminus\overline\D$ and has homogeneous
Neumann boundary values. Accordingly, $u_h'$ and $w$ 
share the same Cauchy data on $\partial\Omega$. Since they
are both harmonic in $\Omega\setminus\overline\D$ they coincide up to the
boundary of $\D$ by virtue of Holmgren's theorem. In particular, their
restrictions to $\Omega\setminus\overline\D$ have
the same trace on $\partial\D$. Since the material derivative 
$\upunkt_{H}$ associated with the extension $H$ of $h$ described
in the beginning of Section~\ref{Sec:estimates}
belongs to $H^1(\Omega)$ and
the traces of $u^\pm$ (and their tangential derivatives) coincide on 
$\partial\D$ it follows from \req{Thm:domderiv} and \req{Bnu} that
\begin{align*}
   \bigl[u_h'\bigr]_{\partial\D}
   &\,=\, -H|_{\partial\D}\cdot\bigl[\nabla u\bigr]_{\partial\D}
    \,=\, -h\cdot\bigl[\nabla u\bigr]_{\partial\D}
    \,=\, -(h\cdot\nu)\Bigl(\frac{\partial}{\partial\nu}u^+
                           \,-\,\frac{\partial}{\partial\nu}u^-\Bigr)\\
   &\,=\, (1-k)(h\cdot\nu)\frac{\partial}{\partial\nu}u^-\,.
\end{align*}
According to \req{HeRu98-2} this shows that $w$ and $u_h'$ experience the
same jump across $\partial\D$, and therefore the traces on $\partial\D$ of
$w|_\D$ and $u_h'|_\D$ also coincide. Both being harmonic functions
we thus conclude that $w=u_h'$ in $\D$, and hence, in all of $\Omega$.
\end{proof}

\section{The case of an insulating or a perfectly conducting polygonal 
inclusion}
\label{Sec:degeneratecases}
So far we have assumed that the conductivity of the inclusion is a known
positive value $k>0$. Now we turn to the limiting cases which are formally
described by $k=0$ (the insulating case) and $k=+\infty$ (the perfectly
conducting case).

\subsection{The insulating case}
\label{Subsec:insulating}
The appropriate formulation of the 
conductivity equation~\req{conductivity-equation} when $\D$ is an insulating
inclusion, is in terms of the boundary value problem
\be{insulating-equation}
\begin{array}{c}
   \Delta u \,=\, 0 \quad \text{in $\Omega\setminus\overline\D$}\,, \qquad
   {\displaystyle \int_{\partial\Omega} u\ds \,=\, 0}\,,\\[2ex]
   \dfrac{\partial}{\partial\nu}u \,=\, 0 \quad \text{on $\partial\D$}\,,
   \qquad
   \dfrac{\partial}{\partial\nu}u \,=\, f \quad \text{on $\partial\Omega$}\,.
\end{array}
\ee
When $\D$ is smooth the existence of an associated shape derivative of 
$u=u(\D)$ is implicit in the result of \cite[Section~3.2]{SoZo92}:
this shape derivative in direction $h\in\S$ is given by the 
solution $u_h'\in H^1(\Omega\setminus\overline\D)$ of the 
Neumann boundary value problem
\be{uprime-insulating}
\begin{array}{c}
   \Delta u_h' \,=\, 0 \quad \text{in $\Omega\setminus\overline\D$}\,, \qquad
   {\displaystyle \int_{\partial\Omega} u_h'\ds \,=\, 0}\,, \\[2ex]
   \dfrac{\partial}{\partial\nu} u_h' \,=\, 
      \dfrac{\partial}{\partial\tau}
         \Bigl((h\cdot\nu)\dfrac{\partial}{\partial\tau} u\Bigr)
      \quad \text{on $\partial\D$}\,, \qquad
   \dfrac{\partial}{\partial\nu} u_h' \,=\, 0 \quad 
   \text{on $\partial\Omega$}\,.
\end{array}
\ee
To the best of our knowledge the existence of a corresponding shape derivative
for a polygonal inclusion $\D$ has not yet been investigated. 
However, the analysis in \cite{BMPS18} can be adapted in a 
straightforward manner to derive the respective material derivative 
$\upunkt_{H}\in H^1(\Omega\setminus\overline\D)$ in the direction of
any $H\in\W$
for the insulating case and the corresponding variational definition
of $\partial\Lambda_f(\D)h=\upunkt_{H}|_{\partial\Omega}$,
when $h=H|_{\partial\D}$, namely
\bdm
   \scalp{\partial\Lambda_f(\D)h,g}_{L^2(\partial\Omega)}
   \,=\, \int_{\partial\D} (h\cdot\nu)\,
            \frac{\partial}{\partial\tau} u \,
            \frac{\partial}{\partial\tau}v_g\ds
\edm
for every $g\in L^2_\diamond(\partial\Omega)$, where $v_g$ is the corresponding
solution of \req{insulating-equation} with $f$ replaced by $g$.
This is the natural analog of \req{BFV17} for $k=0$. 

For the proof of the corresponding version of Theorem~\ref{Thm:HeRu98-1} 
analogous properties of the 
solution of the forward problem~\req{insulating-equation}, as recollected in 
Section~\ref{Sec:forward} for the case $k>0$, are needed. 
The corresponding results can be found, e.g., in \cite{Gris92}:
In the case $k=0$ the eigenvalues $\gamma_{ij}^2$ and eigenfunctions $y_{ij}$ 
which enter into the expansion of $u$ near any of the vertices,
are known explicitly, cf.~\cite[p.~50]{Gris92}, namely
\be{gamma-Grisvard}
   \gamma_{ij} \,=\, j\pi/(2\pi-\alpha_i)\,, \qquad j\in\N_0\,,
\ee
and, for $j\in\N$,
\bdm
   y_{ij}(\theta) \,=\, \Bigl(\frac{2}{2\pi-\alpha_i}\Bigr)^{1/2}
                       \cos\gamma_{ij}(\theta-\alpha_i)\,, 
   \qquad \alpha_i<\theta<2\pi\,.
\edm
Take note that the nonnegative roots $\gamma_{ij}$
of these eigenvalues still satisfy \req{gamma} when $0<\alpha_i<\pi$, 
whereas they are all greater than one, if $\pi<\alpha_i<2\pi$. 
Using these properties of the solution of the forward problem
the proof of Theorem~\ref{Thm:HeRu98-1} carries over to the insulating case 
with straightforward modifications; 
for example, the singular subspaces $S_i$ are now two-dimensional, each, 
and only contain the restrictions to $\Omega\setminus\overline\D$ of the 
singular functions $w_i$ of \req{uiprime}.
Since at least one of the interior angles of $\D$ is smaller than $\pi$, 
the corresponding singular functions
only belong to $H^\gamma(\Omega\setminus\overline\D)$ for some 
$\gamma\in(1/2,1)$ according to \cite[Theorem~1.2.18]{Gris92}, 
and hence the overall smoothness of the solution $u_h'$ of
\req{uprime-insulating} is of similar type as in the case
of a polygonal inclusion with positive conductivity.

If the inhomogeneous Neumann boundary data on $\partial\D$
in \req{uprime-insulating} are defined by some vector field $h\in\S$,
a little extra care is necessary to show that the corresponding solution 
of \req{uprime-insulating} is the shape derivative of $u$ in direction $h$,
because the function $(u_{th}-u)/t$ is not defined in all of
$\Omega$; in fact, the two functions $u_{th}$ and $u$ live on different domains.
To overcome this problem one can consider an arbitrary domain 
$\Omega'\subset\Omega\setminus\overline\D$
which satisfies $\partial\Omega'\cap\partial\D=\emptyset$, in which case
\bdm
   \frac{u_{th}-u}{t}\,:\,\Omega'\to\R
\edm
is well-defined for $t$ sufficiently close to zero. 
The analog of Theorems~\ref{Thm:domderiv}, \ref{Thm:HeRu98-1}, and 
\ref{Thm:domderivPDE} then reads as follows.

\begin{theorem}
\label{Thm:domderiv-insulating}
Let $\D$ be a polygon with simply connected closure $\overline\D\subset\Omega$
and $h\in\S$. 
Then the boundary value problem~\req{uprime-insulating} has a unique solution 
$u'\in H^1(\Omega\setminus\overline\D)\oplus S_1\oplus\dots\oplus S_n
\subset H^\gamma(\Omega\setminus\overline\D)$ for some $\gamma\in(1/2,1)$.
This solution is the shape derivative of $u=u(\D)$ in the direction of $h$. 
This means that if $H\in\W$ is the extension of $h$ described in the 
beginning of Section~\ref{Sec:estimates}, then
\bdm
   \frac{u_{th}-u}{t} \,\to\, u_h' 
   \,=\, \upunkt_{H} \,-\, H\cdot\nabla u\,, 
   \qquad t\to 0\,,
\edm
in $L^2(\Omega')$ for every domain $\Omega'\subset\Omega$ with
$\partial\Omega'\cap\partial\D=\emptyset$.
The trace of $u_h'$ on $\partial\Omega$ is the shape derivative of 
$\Lambda_f(\D)$ in direction $h$.
\end{theorem}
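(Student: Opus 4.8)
The plan is to transplant the three-step architecture of the conducting case --- Theorem~\ref{Thm:HeRu98-1}, Theorem~\ref{Thm:domderiv}, and Theorem~\ref{Thm:domderivPDE} --- to the Neumann setting, exploiting that for $k=0$ the angular eigenpairs are explicit via \req{gamma-Grisvard}. First I would establish that \req{uprime-insulating} has a unique solution in $H^1(\Omega\setminus\overline\D)\oplus S_1\oplus\dots\oplus S_n$. Uniqueness is immediate: the difference of two solutions is harmonic in $\Omega\setminus\overline\D$, has homogeneous Neumann data on $\partial\D$ and on $\partial\Omega$, and vanishing mean, hence vanishes. For existence I would, at each vertex $x_i$, select a singular function $w_i\in S_i$ of the form \req{uiprime} whose angular part $\ytilde_i$ solves the homogeneous angular equation at the exponent $\gamma_{i1}-1$ on the exterior wedge $(\alpha_i,2\pi)$, and fix its two free coefficients $A_i'{}^+,B_i'{}^+$ by a $2\times 2$ linear system --- the insulating analogue of \req{linalg-inhomogen0} --- so that the Neumann trace of $w_i$ on $\Gamma_i$ and $\Gamma_{i+1}$ reproduces the leading $r^{\gamma_{i1}-2}$ behaviour of $\frac{\partial}{\partial\tau}((h\cdot\nu)\frac{\partial}{\partial\tau}u)$ read off from the expansion \req{gamma-Grisvard}. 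This system is invertible because $\gamma_{i1}-1$ is never one of the roots \req{gamma-Grisvard} of the exterior angular Neumann problem.

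Subtracting $\sum_i w_i$ leaves a regular Neumann problem in $\Omega\setminus\overline\D$ for a remainder $w_0\in H^1(\Omega\setminus\overline\D)$, with a source of the type \req{Fprime} supported in annuli around the vertices and boundary data in $H^{-1/2}(\partial\D)$. The only genuine constraint is the single compatibility condition that the total source balance the boundary flux; I would verify it exactly as in the passage \req{Integrabilitaetsbedingung}--\req{integral-identity}, applying Green's formula in $\Omega\setminus\overline{\D\cup\B_\delta}$, isolating the conditionally convergent $\delta^{\gamma_{i1}-1}$ contributions coming from $\frac{\partial}{\partial\tau}((h\cdot\nu)\frac{\partial}{\partial\tau}u)$ and from the angular integrals of $\ytilde_i$, and checking that the coefficients prescribed by the linear system make these cancel as $\delta\to 0$. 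This produces $u'=w_0+\sum_i w_i$ in $H^1(\Omega\setminus\overline\D)\oplus S_1\oplus\dots\oplus S_n$; since at least one interior angle is smaller than $\pi$, the regularity $u'\in H^\gamma(\Omega\setminus\overline\D)$ with $\gamma<\min_i\gamma_{i1}\in(1/2,1)$ follows from \cite[Theorem~1.2.18]{Gris92}.

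It then remains to identify $u'$ with the shape derivative. I would first rerun the Green's-formula computation of Theorem~\ref{Thm:domderivPDE} in $\Omega\setminus\overline{\D\cup\B_\delta}$, now with a single Neumann boundary term rather than a jump, integrate by parts along each edge, and track both the vertex contributions and the two integrals over $\partial\B_\delta$; the cancellation of $\delta^{\gamma_{i1}-1}$ terms already used above shows that $\scalp{u',g}_{L^2(\partial\Omega)}=\int_{\partial\D}(h\cdot\nu)\frac{\partial}{\partial\tau}u\,\frac{\partial}{\partial\tau}v_g\ds$ for every $g\in L^2_\diamond(\partial\Omega)$, i.e.\ $u'|_{\partial\Omega}=\partial\Lambda_f(\D)h$. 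Next I would set $u_h'=\upunkt_{H}-H\cdot\nabla u$ with $\upunkt_{H}\in H^1(\Omega\setminus\overline\D)$ the material derivative from the adapted analysis of \cite{BMPS18}. Because $u_{th}$ and $u$ are both harmonic on any $\Omega'$ with $\overline{\Omega'}\subset\Omega\setminus\overline\D$ once $t$ is small, the Weyl's-lemma argument of Theorem~\ref{Thm:domderiv} gives $(u_{th}-u)/t\to u_h'$ in $L^2(\Omega')$ and harmonicity of $u_h'$ in $\Omega\setminus\overline\D$; since $H$ vanishes near $\partial\Omega$ one also obtains $\frac{\partial}{\partial\nu}u_h'=0$ and $u_h'|_{\partial\Omega}=\upunkt_{H}|_{\partial\Omega}=\partial\Lambda_f(\D)h$. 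Thus $u'$ and $u_h'$ are harmonic in $\Omega\setminus\overline\D$ with identical Cauchy data on $\partial\Omega$, and Holmgren's theorem forces $u'=u_h'$ throughout $\Omega\setminus\overline\D$, which also yields the final trace assertion.

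I expect the main obstacle to be the same bookkeeping of divergent vertex terms that dominated the conducting case: several of the quantities appearing near the corners are only conditionally integrable, and the whole argument hinges on the precise coefficients of the $2\times 2$ system forcing the $\delta^{\gamma_{i1}-1}$ singularities to cancel in both the compatibility identity and the boundary-pairing identity. A secondary, specifically insulating, complication is that $u_{th}$ and $u$ live on different domains, so the convergence can only be formulated on subdomains $\Omega'$ bounded away from $\partial\D$, which is why harmonicity together with Holmgren must replace the direct jump-matching across $\partial\D$ used for $k>0$.
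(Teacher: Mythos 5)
Your proposal is correct and takes essentially the same route as the paper, which establishes this theorem precisely by carrying over Theorems~\ref{Thm:HeRu98-1}, \ref{Thm:domderiv}, and \ref{Thm:domderivPDE} with the modifications you spell out: two-dimensional singular spaces $S_i$ with exterior coefficients fixed by a solvable $2\times 2$ system (solvable because $(\gamma_{i1}-1)(2\pi-\alpha_i)=\alpha_i-\pi$ is never an integer multiple of $\pi$ for $\alpha_i\in(0,2\pi)\setminus\{\pi\}$), the same $\delta\to 0$ cancellation of the $\delta^{\gamma_{i1}-1}$ vertex terms in both the compatibility condition and the boundary-pairing identity, and the subdomain formulation on $\Omega'$ with Holmgren's theorem closing the identification. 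One cosmetic point: the $L^2(\Omega')$ convergence of $(u_{th}-u)/t$ follows from the Fr\'echet property of the material derivative together with \cite[Lemme~5.3.3]{HePi05}, exactly as in Theorem~\ref{Thm:domderiv}, whereas Weyl's lemma only supplies the harmonicity of the limit.
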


The other results of Section~\ref{Sec:estimates} apply 
verbatim to the insulating case;
the corresponding modifications of the proofs are straightforward.

\subsection{The perfectly conducting case}
\label{Subsec:conducting}
When $k$ becomes infinitely large, on the other hand, the
transmission condition~\req{Bnu} forces $u^-$ to approximate a
constant but unknown value; this is in agreement with our intuition that
in a perfect conductor potential differences equilibrate immediately. 
Up to an additive constant the corresponding potential 
$u\in H^1(\Omega\setminus\overline\D)$ is therefore given by the
solution of the boundary value problem
\be{superconducting-equation}
   \Delta u\,=\, 0 \quad \text{in $\Omega\setminus\overline\D$}\,, \qquad
   u\,=\,0 \quad \text{on $\partial\D$}\,, \qquad 
   \frac{\partial}{\partial\nu}u \,=\, f \quad \text{on $\partial\Omega$}\,.
\ee
Note that $u$ can be extended by zero to a function in $H^1(\Omega)$.
Ito, Kunisch, and Li~\cite{IKL01} pointed out that 
the solution of the boundary value problem
\be{IKL01}
\begin{array}{c}
   \Delta u_h' \,=\, 0 \quad \text{in $\Omega\setminus\overline\D$}\,, \\[2ex]
   u_h' \,=\, - (h\cdot\nu)\dfrac{\partial}{\partial\nu} u
      \quad \text{on $\partial\D$}\,, \qquad
   \dfrac{\partial}{\partial\nu} u_h' \,=\, 0 \quad 
   \text{on $\partial\Omega$}\,.
\end{array}
\ee
is the shape derivative of $u=u(\D)$ of \req{superconducting-equation}
in the direction of $h\in\S$, provided that $\D$ has a smooth boundary.

When $\D$ is a polygon the solution of the 
forward problem~\req{superconducting-equation} admits a similar expansion
near the vertices of $\D$ as in the insulating case; 
again, see \cite{Gris92} for details.
The eigenvalues $\gamma_{ij}^2$ which are relevant for this expansion
are the same as in the insulating case, cf.~\req{gamma-Grisvard}, 
this time the eigenfunctions being the corresponding sine functions
\bdm
   y_{ij}(\theta) \,=\, \Bigl(\frac{2}{2\pi-\alpha_i}\Bigr)^{1/2}
                       \sin \gamma_{ij}(\theta-\alpha_i)\,,
   \qquad \alpha_i<\theta<2\pi\,,
\edm
where the index $j$ runs through the natural numbers only; all eigenvalues
are strictly positive.

The analysis in \cite{BMPS18} also extends to the perfectly conducting
case, showing that the shape derivative $\partial\Lambda_f(\D)$ of
a polygonal perfect conductor $\D$ exists, and that it satisfies 
\bdm
   \scalp{\partial\Lambda_f(\D)h,g}_{L^2(\partial\Omega)}
   \,=\, -\int_{\partial\D} (h\cdot\nu)\,
             \frac{\partial}{\partial\nu} u\,
             \frac{\partial}{\partial\nu}v_g\ds
\edm
for every $g\in L^2_\diamond(\partial\Omega)$, where $v_g$ solves the boundary
value problem~\req{superconducting-equation} with $f$ replaced by $g$.
Note that the projection of $\partial\Lambda_f(\D)h$ onto 
$L^2_\diamond(\partial\Omega)$ is independent of the chosen grounding 
in the definition~\req{superconducting-equation} of $u$ and $v_g$, 
respectively.

Further, the analysis from Sections~\ref{Sec:estimates} to
\ref{Sec:domderiv} can be modified in a straightforward way to achieve 
the following result.

\begin{theorem}
\label{Thm:uprime-perfectconducting}
Let $\D$ be a polygon with simply connected closure 
$\overline\D\subset\Omega$ and $h\in\S$.
Then the shape derivative of the solution $u=u(\D)$ of 
\req{superconducting-equation} in direction $h$ is given by the 
unique solution 
$u_h'\in H^1(\Omega\setminus\overline\D)\oplus S_1\oplus\dots\oplus S_n$ 
of the boundary value problem~\req{IKL01}.
This solution belongs to $H^{\gamma}(\Omega\setminus\overline\D)$ 
for some $\gamma\in(1/2,1)$ and 
its trace on $\partial\Omega$ is the shape derivative of $\Lambda_f(\D)$
in direction $h|_{\partial\D}$.
\end{theorem}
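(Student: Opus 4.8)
The plan is to reproduce, for the perfectly conducting configuration, the three-part strategy used for $k>0$ in Theorems~\ref{Thm:HeRu98-1}, \ref{Thm:domderiv}, and \ref{Thm:domderivPDE}, replacing the transmission data by the mixed Dirichlet--Neumann data of \req{IKL01} and using the explicit eigendata from \cite[p.~50]{Gris92}: near a vertex $x_i$ with $\alpha_i<\pi$ the forward solution $u$ of \req{superconducting-equation} expands as $\sum_{j}\beta_{ij}\,y_{ij}(\theta)\,r^{\gamma_{ij}}$ with $\gamma_{i1}=\pi/(2\pi-\alpha_i)\in(1/2,1)$, cf.~\req{gamma-Grisvard}, and sine eigenfunctions, so that $\partial u/\partial\nu$ behaves like $r^{\gamma_{i1}-1}$ on either adjacent edge. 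First I would establish existence, uniqueness, and regularity of the solution $u_h'$ of \req{IKL01}; then I would identify its trace on $\partial\Omega$ with $\partial\Lambda_f(\D)h$; and finally I would identify $u_h'$ with the genuine shape derivative of $u$.

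For existence I would follow the proof of Theorem~\ref{Thm:HeRu98-1}. The prescribed Dirichlet datum $-(h\cdot\nu)\,\partial u/\partial\nu$ on $\partial\D$ inherits the corner singularity $r^{\gamma_{i1}-1}$ of $\partial u/\partial\nu$, with generally different coefficients on the two edges meeting at $x_i$, since the limits $h_i^\pm$ of $h\cdot\nu$ differ. At each vertex with $\alpha_i<\pi$ I would subtract a singular function $w_i\in S_i$ of the form $\chi_i\,\ytilde_i(\theta)\,r^{\gamma_{i1}-1}$, supported in the exterior wedge only, whose two free coefficients are fixed by matching the two edge traces; this is a $2\times2$ linear system whose determinant reduces to $\sin\bigl((\gamma_{i1}-1)(2\pi-\alpha_i)\bigr)=\sin(\alpha_i-\pi)=-\sin\alpha_i\neq0$, so it is uniquely solvable. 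The remainder then carries $H^{1/2}$ Dirichlet data, and the reduced problem for $w_0$ is a standard mixed Dirichlet--Neumann problem on $\Omega\setminus\overline\D$, uniquely solvable in $H^1$; uniqueness of the full solution follows, as in Theorem~\ref{Thm:HeRu98-1}, from the triviality of the homogeneous problem. Because only vertices with $\alpha_i<\pi$ contribute singular functions, and there $\gamma_{i1}\in(1/2,1)$, the membership $u_h'\in H^\gamma(\Omega\setminus\overline\D)$ for some $\gamma\in(1/2,1)$ follows from \cite[Theorem~1.2.18]{Gris92}.

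The central identity $u_h'|_{\partial\Omega}=\partial\Lambda_f(\D)h$ I would obtain, as in Theorem~\ref{Thm:domderivPDE}, by testing against the solution $v_g$ of \req{superconducting-equation} with $f$ replaced by $g$ and applying Green's formula in $\Omega\setminus\overline{\D\cup\B_\delta}$, where $\B_\delta$ is a union of small disks about the vertices. The Neumann condition on $\partial\Omega$ produces $\scalp{u_h',g}_{L^2(\partial\Omega)}$, while on $\partial\D$ the homogeneous Dirichlet condition $v_g=0$ annihilates half of the boundary terms, and the condition $u_h'=-(h\cdot\nu)\,\partial u/\partial\nu$ turns the rest into $-\int_{\partial\D\setminus\B_\delta}(h\cdot\nu)\,(\partial u/\partial\nu)(\partial v_g/\partial\nu)\,\ds$, which converges to the variational expression for $\partial\Lambda_f(\D)h$ by virtue of \req{Heps}. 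The decisive simplification over the case $k>0$ is that $v_g$ itself vanishes at the vertices, behaving like $r^{\gamma_{i1}}$; hence, after the splitting $u_h'=w_0+\sum_i w_i$, every integral over $\partial\B_\delta$ is of order $\delta^{2\gamma_{i1}-1}=o(1)$, and the delicate cancellation of the $\delta^{\gamma_{i1}-1}$ terms required in Theorem~\ref{Thm:domderivPDE} simply does not occur. Letting $\delta\to0$ yields the identity.

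Finally I would identify $u_h'$ with the shape derivative itself. As recalled before the statement, the material-derivative analysis of \cite{BMPS18} carries over and furnishes $\upunkt_H\in H^1(\Omega\setminus\overline\D)$ with $\partial\Lambda_f(\D)h=\upunkt_H|_{\partial\Omega}$. Since $u_{th}$ and $u$ are defined on different domains, I would argue as in the insulating Theorem~\ref{Thm:domderiv-insulating}: on any subdomain $\Omega'$ with $\partial\Omega'\cap\partial\D=\emptyset$ the quotient $(u_{th}-u)/t$ is well defined for small $t$ and converges in $L^2(\Omega')$ to $\upunkt_H-H\cdot\nabla u$, which is harmonic in $\Omega\setminus\overline\D$ and shares its Cauchy data on $\partial\Omega$ (trace and vanishing Neumann values) with $u_h'$; Holmgren's theorem then forces the two to coincide throughout $\Omega\setminus\overline\D$. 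I expect the main obstacle to lie in the existence step, namely in verifying that the two-dimensional spaces $S_i$ together with the $H^1$ correction reproduce the Dirichlet datum to the correct order and in pinning down the regularity exponent; the vertex estimates that were the crux for $k>0$ are, as noted, comparatively harmless here because $v_g$ vanishes at the corners.
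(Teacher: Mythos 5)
Your proposal is correct and follows essentially the same route as the paper, which proves this theorem only by asserting that the analysis of Sections~\ref{Sec:estimates}--\ref{Sec:domderiv} adapts: you carry out exactly the intended adaptation, with the two-dimensional singular spaces $S_i$ at the vertices with $\alpha_i<\pi$ (your determinant computation $\sin\bigl((\gamma_{i1}-1)(2\pi-\alpha_i)\bigr)=-\sin\alpha_i\neq 0$ is the correct replacement for Lemma~\ref{Lem:Y}), the Green's-formula trace identification in which the vanishing of $v_g$ at the vertices makes all $\partial\B_\delta$ contributions $o(1)$ without the cancellation \req{integral-identity}, and the insulating-case argument of Theorem~\ref{Thm:domderiv-insulating} (material derivative plus Holmgren) for the final identification. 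Your observation that the delicate $\delta^{\gamma_{i1}-1}$ bookkeeping of Theorem~\ref{Thm:domderivPDE} disappears here is precisely the simplification the paper has in mind.
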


Like in the insulating case the sets $S_i$ are two-dimensional, each, and
the other results of Section~\ref{Sec:estimates}
are valid in the perfectly conducting case, too.

\section*{Acknowledgement}
The author is grateful to Elena Beretta for pointing out 
references~\cite{BMPS18,Laur20} as a possible starting point to determine
the shape derivative $\partial\Lambda_f$ for insulating or perfectly
conducting polygonal inclusions. Thanks also go to the referees: their
comments helped to improve the presentation.



\begin{thebibliography}{99}
\bibitem{ADK07}
{\sc L.~Afraites, M.~Dambrine, and D.~Kateb},
\emph{Shape methods for the transmission problem with a single measurement},
Numer. Funct. Anal. Optim.~{\bf 28} (2007), pp.~519--551.
\bibitem{AsPa06}
{\sc K.~Astala and L.~P\"aiv\"arinta},
\emph{Calderon's inverse conductivity problem in the plane},
Ann. of Math. {\bf 163} (2006), pp.~265--299.
\bibitem{BLMS99}
  {\sc L.~{Baratchart}, J.~Leblond, F.~Mandr{\'e}a, and E.B. Saff},
  \emph{How can the meromorphic approximation help to solve some 2{D} inverse
  problems for the Laplacian?},
  Inverse Problems {\bf 15} (1999), pp.~79--90.
\bibitem{BFI92}
{\sc H.~Bellout, A.~Friedman, and V.~Isakov},
\emph{Stability for an inverse problem in potential theory},
Trans. Amer. Math. Soc. {\bf 332} (1992), pp.~271--296.
\bibitem{BFV17}
 {\sc E.~Beretta, E.~Francini, and S. Vessella},
 \emph{Differentiability of the Dirichlet to Neumann map under movements
 of polygonal inclusions with an application to shape optimization},
 SIAM J. Math. Anal.~{\bf 49} (2017), pp.~756--776.
\bibitem{BMPS18}
   {\sc E.~Beretta, S.~Micheletti, S.~Perotto, and M.~Santacesaria},
   \emph{Reconstruction of a piecewise constant conductivity in a polygonal
   partition via shape optimization in EIT},
   J. Comput. Phys.~{\bf 353} (2018), pp.~264--280.
\bibitem{CINGS90}
   {\sc M.~Cheney, D.~Isaacson, J.~Nowell, J.~Goble, and S.~Simske},
   \emph{{\sc noser}: An algorithm for solving the inverse conductivity
   problem},
   Internat. J. Imag. Syst. Tech.~\textbf{2} (1990), pp.~85--101.
\bibitem{CoSt85}
  {\sc M.~Costabel and E.~Stephan},
  \emph{A direct boundary integral equation method for transmission problems},
  J. Math. Anal. Appl.~{\bf 106} (1985), pp.~367--413.
\bibitem{DeZo92}
  {\sc M.C.~Delfour and J.P.~Zol\'{e}sio},
  \emph{Structure of shape derivatives for nonsmooth domains},
  J. Funct. Anal.~{\bf 104} (1992), pp.~1--33.
\bibitem{Dobs92}
  {\sc D.C.~Dobson},
  \emph{Convergence of a reconstruction method for the inverse conductivity 
  problem},
  SIAM J. Appl. Math. {\bf 52} (1992), pp.~442--458.
\bibitem{EFV92}
  {\sc L.~Escauriaza, E.B.~Fabes, and G. Verchota},
  \emph{On a regularity theorem for weak solutions to transmission problems
  with internal Lipschitz boundaries},
  Proc. Amer. Math. Soc.~{\bf 115} (1992), pp.~1069--1076.
\bibitem{FrIs89}
{\sc A.~Friedman and V.~Isakov},
\emph{On the uniqueness in the inverse conductivity problem with one 
      measurement},
Indiana Univ. Math. J.~{\bf 38} (1989), pp.~563--579.
\bibitem{Gris92}
  {\sc P.~Grisvard},
  Singularities in Boundary Value Problems,
  Springer Verlag, Berlin, 1992.
\bibitem{Hank08}
  {\sc M.~Hanke},
  \emph{On real-time algorithms for the location search of discontinuous
  conductivities with one measurement},
  Inverse Problems~\textbf{24} (2008), 045005.
\bibitem{Hank24}
  {\sc M.~Hanke},
  \emph{Lipschitz stability of an inverse conductivity problem
  with two Cauchy data pairs},
  Inverse Problems~\textbf{40} (2024), 105015.
\bibitem{HHR08}
  {\sc M.~Hanke, N.~Hyvönen, and S.~Reusswig},
  \emph{Convex source support and its application to electric impedance 
  tomography},
  SIAM J. Imaging Sci. \textbf{1} (2008), pp.~364--378.
\bibitem{HePi05}
  {\sc A.~Henrot and M.~Pierre},
  Variation et optimisation de formes. Une analyse g\'eom\'etrique,
  Springer, Berlin, 2005.
\bibitem{HeRu98}
  {\sc F.~Hettlich and W.~Rundell},
  \emph{The determination of a discontinuity in a conductivity from a single
  boundary measurement},
  Inverse Problems \textbf{14} (1998), pp.~67--82. 
\bibitem{IKL01}
  {\sc K.~Ito, K.~Kunisch, and Z.~Li},
  \emph{Level-set function approach to an inverse interface problem},
  Inverse Problems \textbf{19} (2001), pp.~1225--1242. 
\bibitem{KaLe04}
  {\sc H.~Kang and H.~Lee},
  \emph{Identification of simple poles via boundary measurements and an
  application of {E}{I}{T}},
  Inverse Problems \textbf{20} (2004), pp.~1853--1863.
\bibitem{Kirs11}
  {\sc A.~Kirsch},
  An Introduction to the Mathematical Theory of Inverse Problems, 
  Second Edition,
  Springer, New York, 2011.
\bibitem{KLMS07}
  {\sc K.~Knudsen, M.~Lassas, J.L.~Mueller, and S.~Siltanen},
  \emph{D-bar method for electrical impedance tomography with discontinuous
  conductivities},
  SIAM J. Appl. Math.~\textbf{67} (2007), pp.~893--913.
\bibitem{Kond67}
  {\sc V.A.~Kondratiev},
  \emph{Boundary value problems for elliptic equations in domains with
  conical or angular points},
  Trans. Moscow Math. Soc.~\textbf{16} (1967), pp.~227--313.
\bibitem{Laur20}
  {\sc A.~Laurain},
  \emph{Distributed and boundary expressions of first and second order
  shape derivatives in nonsmooth domains},
  J. Math. Pures Appl.~\textbf{134} (2020), pp.~328--368.
\bibitem{LNP16}
  {\sc J.~Lamboley, A.~Novruzi, and M. Pierre},
  \emph{Estimates of first and second order shape derivatives in nonsmooth
  multidimensional domains and applications},
  J. Funct. Anal.~{\bf 270} (2016), pp.~2616--2652.
\bibitem{McLe00}
  {\sc W.~McLean},
  Strongly Elliptic Systems and Boundary Integral Equations,
  Cambridge University Press, 
  Cambridge, 2000.
\bibitem{MuSi12}
  {\sc J.L.~Mueller and S.~Siltanen},
  Linear and Nonlinear Inverse Problems with Practical Applications,
  SIAM, Philadelphia, 2012.
\bibitem{PLP95}
   {\sc K.~Paulson, W.~Lionheart, and M.~Pidcock},
   \emph{{\sc pompus}: An optimized EIT reconstruction algorithm},
Inverse Poblems {\bf 11} (1995), pp.~425--437.
\bibitem{Seo96}
  {\sc J.K.~Seo},
  \emph{On the uniqueness in the inverse conductivity problem},
  J. Fourier Anal. Appl.~{\bf 2} (1996), pp.~227--235.
\bibitem{SoZo92}
  {\sc J.~Sokolowski and J.-P.~Zolesio},
  Introduction to Shape Optimization,
  Springer, Berlin, 1992.
\bibitem{SCII91}
  {\sc E.~Somersalo, M.~Cheney, D.~Isaacson, and E.~Isaacson},
  \emph{Layer striping: a direct numerical method for impedance imaging},
  Inverse Problems~\textbf{7} (1991), pp.~899--926.
\end{thebibliography}
\end{document}